\documentclass {article}
\usepackage{etex}
\usepackage[utf8]{inputenc}
\usepackage[english]{babel}
\usepackage{amsmath}
\usepackage{amsthm}
\usepackage{amssymb}
\usepackage{sectsty}
\usepackage{titlesec}
\usepackage{color}
\usepackage[color,matrix,arrow]{xy}
\usepackage{amsgen}
\usepackage{amstext}
\usepackage{amsbsy}
\usepackage{amsopn}
\usepackage{amsfonts}
\usepackage{eepic}
\usepackage{graphicx}
\usepackage{epsf}
\usepackage{pstricks}
\xyoption{all}

\usepackage{tikz}
\usepackage{pgf}

\usepackage{hyperref}

\newcommand{\footrecall}[1]{%
}

\def\mapright#1{\smash{\mathop{\longrightarrow}\limits^{#1}}}
\def\tra#1{\smash{\mathop{\mid\kern
-1pt\joinrel\relbar\joinrel\relbar}\limits^{*}_{#1}}}
\def\longtra#1{\smash{\mathop{\mid\kern
-1pt\joinrel\relbar\joinrel\relbar\joinrel\relbar}\limits^{*}_{#1}}}
\def\vlongtra#1{\smash{\mathop{\mid\kern
-1pt\joinrel\relbar\joinrel\relbar\joinrel\relbar\joinrel\relbar}\limits^{*}_{#1}}}
\def\vvlongtra#1{\smash{\mathop{\mid\kern
-1pt\joinrel\relbar\joinrel\relbar\joinrel\relbar\joinrel\relbar\joinrel\relbar}\limits^{*}_{#1}}}
\def\vvvlongtra#1{\smash{\mathop{\mid\kern
-1pt\joinrel\relbar\joinrel\relbar\joinrel\relbar\joinrel\relbar\joinrel\relbar\joinrel\relbar}\limits^{*}_{#1}}}
\def\etra#1{\smash{\mathop{\mid\kern
-1pt\joinrel\relbar\joinrel\relbar}\limits_{#1}}}

\def\vlongrightarrow{\relbar\joinrel\longrightarrow}

\def\vvlongrightarrow{\relbar\joinrel\vlongrightarrow}
\def\vvvlongrightarrow{\relbar\joinrel\vvlongrightarrow}
\def\vvvvlongrightarrow{\relbar\joinrel\vvvlongrightarrow}
\def\vvvvvlongrightarrow{\relbar\joinrel\vvvvlongrightarrow}

\def\vvlongmapright#1{\smash{\mathop{\vvvlongrightarrow}\limits^{#1}}}

\def\vvvvlongmapright#1{\smash{\mathop{\vvvvvlongrightarrow}\limits^{#1}}}

\titleformat*{\section}{\large\bfseries}
\titleformat*{\subsection}{\normalsize \bfseries}


\tolerance = 10000
\oddsidemargin = 1 cm
\textwidth = 14 cm
\textheight = 22.5 cm
\topmargin = -1 cm
\unitlength = 1 mm



\title{On finitely generated submonoids of virtually free groups}
\date{}


\begin{document}

\newtheorem{theorem}{Theorem}[section]
\newtheorem{lemma}[theorem]{Lemma}
\newtheorem{question}[theorem]{Question}
\newtheorem{proposition}[theorem]{Proposition}
\newtheorem{corollary}[theorem]{Corollary}

\theoremstyle{definition}
\newtheorem{definition}[theorem]{Definition}
\newtheorem{example}[theorem]{Example}

\newcommand{\ophi}{\overline{\varphi}}
\newcommand{\opsi}{\overline{\psi}}
\newcommand{\wt}{\widetilde}



\begin{center}
{\Large On finitely generated submonoids of virtually free groups
}
\par \vspace{0.5 cm}
 Pedro V. Silva\footnote{E-mail address: pvsilva@fc.up.pt}, Alexander Zakharov\footnote{E-mail address: zakhar.sasha@gmail.com}

\par

\end{center}

{\it \small Centre of Mathematics, University of Porto,
R. Campo Alegre 687, 4169-007 Porto, Portugal}
\vspace{0.2 cm}

\begin{abstract}
We prove that it is decidable whether or not a finitely generated submonoid of a virtually free group is graded, introduce a new geometric characterization as quasi-geodesic monoids, and show that their word problem is rational (as a relation). We also solve the isomorphism problem for this class of monoids, generalizing earlier results for submonoids of free monoids. We also prove that the classes of graded monoids, regular monoids and Kleene monoids coincide for submonoids of free groups.
	
	\par 
	\vspace{0.2 cm}
	
	{\it MSC:} 20E05; 20M05; 20F10; 68Q45; 68Q70	
	
	\par \vspace{0.2 cm}
	{\it Keywords:} Free group; virtually free group; submonoids; graded monoid; rational monoid; Kleene monoids; decidability
\end{abstract}

\vspace{0.2 cm}

\section{Introduction}

A lot is known about subgroups of free groups. And the seminal work of Stallings \cite{Sta} provided the tools to expand that knowledge in multiple directions, particularly when automata got into the picture (see \cite{BS1}, \cite{KM}). This glorious picture fades away when we change from subgroups to submonoids, even in the finitely generated case. Indeed, we cannot use anymore inverse automata and that is a major setback.
The membership problem is still decidable, as a consequence of Benois' Theorem (see \cite[Theorem 6.1, p.314]{Sak}), but e.g. the isomorphism problem remains open. 

On the other hand, submonoids of virtually free groups (i.e. groups admitting a free subgroup of finite index) have been studied in \cite{CRR}.

In this paper, we explore a subclass of finitely generated submonoids of virtually free groups which is somehow in the antipodals of subgroups. If the ambient group is free, we show they can be characterized in alternative equivalent ways, which we now describe:

\begin{itemize}

\item

{\bf Graded submonoids:}

those that have a finite system of generators $S$ such that every element of the monoid can be written as a word over $S$ only in finitely many different ways.

\item

{\bf Rational submonoids:}

the structure of the submonoid can be encoded by a finite automaton which recognizes a function associating to every word a normal form.

\item

{\bf Kleene submonoids:}

submonoids where the recognizable subsets coincide with the rational subsets.

\item

{\bf Quasi-geodesic submonoids:}

the continuous paths induced by the submonoid in the Cayley graph of the ambient group are quasi-geodesics in a uniform way.

\end{itemize}
If the ambient group is virtually free, we have 
$$\mbox{graded $\Leftrightarrow$ quasi-geodesic $\Rightarrow$ rational $\Rightarrow$ Kleene}.$$

The concept of graded monoid was introduced by Margolis, Meakin and Sunik in \cite{MMS}. 
Graded monoids form an important class of monoids, which includes, in particular, submonoids of free monoids, Artin monoids, many Baumslag-Solitar monoids, as well as monoids satisfying some small cancellation conditions, see \cite{MMS} and references therein  for further details. The class of graded monoids has a remarkably good algorithmic and finite separability theory, in particular, graded monoids are finitely separable, see \cite{MMS}. Together with the closely related notion of upper distortion function, graded monoids were successfully applied in \cite{MMS} in order to solve the membership problem for some submonoids in one-relator groups. 

Rational monoids were introduced by Sakarovitch \cite{Sak2} and they constitute an interesting alternative to automatic monoids: rational monoids are a more restricted class, but they have stronger properties and are arguably more natural from the viewpoint of semigroup theory: there is a rational function from a free monoid into itself which produces a normal form for the monoid.

Kleene's Theorem states that the classes of rational and recognizable languages coincide over a finite alphabet (see \cite[Theorem 2.1, p.87]{Sak}). In general, these two classes do not coincide. For instance, a subgroup of a group is rational (respectively recognizable) if and only if it is finitely generated (respectively finite index) \cite[Exercise III.1.3 and Theorem III.2.7]{Ber}. 

In a geodesic metric space, one can think of a quasi-geodesic as the image of a geodesic by a quasi-isometry. Every path with bounded domain is a quasi-geodesic for adequate parameters, so what defines a quasi-geodesic submonoid is the fact that one can get uniform parameters for all the paths induced by words on the generators of the submonoid.



The main results of the paper are connected as follows. We use the geometric characterization of graded submonoids as quasi-geodesic submonoids to prove they have a rational word problem. This corresponds to encoding all the relations satisfied by the monoid $M$ by a finite automaton (note that $M$ might be not finitely presented, even if it is a submonoid of a free monoid, see Example \ref{ex4}). Then a map $\varphi: S \rightarrow G$ with $G$ virtually free can be extended to a homomorphism $\overline{\varphi}: M \rightarrow G$ if and only if all the relations in $M$ go to true equalities in $G$ under $\varphi$, and we show this can be decided using the finite automaton encoding the relations and an additional lemma about rational subsets in $G \times G$: one can decide whether such a rational subset lies inside the diagonal. Since a graded submonoid of $G$ has a unique minimal generating set, namely the set of all irreducible elements, we obtain a solution for the isomorphism problem almost immediately.

The paper is organized as follows. In Section 2 we recall some basic definitions from automata theory and formal language theory that are used later, namely, the notions of finite automata, rational and recognizable subsets. In Section 3 we introduce the notion of graded monoid, mention some properties and equivalent characterizations of graded monoids and give some examples of graded and non-graded submonoids of free groups.

In Section 4 we prove that graded submonoids of virtually free groups can be characterized as the quasi-geodesic submonoids. We use the fact that Cayley graphs of virtually free groups are polyhyperbolic geodesic metric spaces.

In Section 5 we show that being graded is decidable for finitely generated submonoids of vitually free groups. This is achieved by reducing the problem to test the number of factorizations with nontrivial factors for finitely many elements of the submonoid. Then we apply Muller-Schupp Theorem \cite{MS} and use classical properties of context-free languages to produce an algorithm.

In Section 6 we show that the word problem of a graded submonoid of a virtually free group (seen as a relation on the free monoid on the generating set $S$) is a rational subset of $S^* \times S^*$. This section features the key construction of the paper: for a given monoid in our class we effectively construct a finite automaton which accepts in a precise sense its word problem (the relations of the monoid). The strategy is to use the quasi-geodesic characterization to somehow navigate in a bounded subgraph of the Cayley graph of the ambient group.

Together with an additional property of rational subsets of the direct square of a group with decidable word problem, this construction allows us to prove in Section 7 that the homomorphism and isomorphism problems are decidable for graded submonoids of  virtually free groups. 

In Section 8 we prove that being graded, rational and Kleene is equivalent for submonoids of free groups, using standard language-theoretic tools such as the lexicographic order or syntactic congruences. We also discuss how these concepts relate in the general virtually free case.

Finally, we collect in Section 9 a few open problems.


\par

\section{Preliminaries}
We will denote finitely generated by f.g. for short. 
All the homomorphisms, epimorphisms and isomorphisms in this paper are meant to be monoid homomorphisms, epimorphisms and isomorphisms respectively, unless stated otherwise. \par 

	We briefly recall some basic definitions from formal language theory and automata theory, see (for example) \cite{HU}, \cite{Sak} for more details. \par  
	\subsection{Rational languages and finite automata}
	Let $A$ be a finite set (called alphabet), and $A^*$ be the corresponding free monoid. Recall that a {\it language} over $A$ is any subset of $A^*$, and the collection of {\it rational languages} (also known as regular languages) over $A$ consists of all the languages which can be obtained from finite languages by taking unions of two languages, concatenations of two languages and Kleene star of a language (i.e., the submonoid generated by this language). Kleene's theorem states that a language is rational if and only if it is accepted by some finite automaton. Recall that a (non-deterministic) {\it automaton} $\mathcal{A}$ over a finite alphabet $A$ is a 5-tuple $(Q, A, I, T, E)$, where
	\begin{itemize}
		\item $Q$ is a non-empty set, called the set of {\it states};
		\item $A$ is a finite non-empty set, called the {\it (input) alphabet};
		\item $I$ is a subset of $Q$, called the set of {\it initial states};
		\item $T$ is a subset of $Q$, called the set of {\it terminal states}; 
		\item $E$ is a subset of $Q \times A \times Q$, called the set of {\it transitions}.
	\end{itemize}
An automaton $\mathcal{A}$ is called:
\begin{itemize}
\item
{\it finite} if the sets $Q$ and $E$ are finite; 
\item
{\it deterministic} if $|I|=1$ and $(p,a,q), (p,a,q') \in E$ implies $q=q'$;
\item
{\it complete} if $E \cap (\{ q \} \times \{ a \} \times Q) \neq \emptyset$ for all $q \in Q$ and $a \in A$.
\end{itemize}
Note  that every automaton $\mathcal{A}$ can be represented in the natural way as a digraph with edges labelled by the elements of $A$: the vertex set is $Q$, each transition $(q_1, a, q_2)$ in $E$ gives rise to an edge labelled by $a$ from $q_1$ to $q_2$, and there are two distinguished sets of vertices -- initial and terminal. Thus we can speak about vertices and (oriented) edges of $\mathcal{A}$, about spanning trees in $\mathcal{A}$, etc.

  A {\it path} $p$ in $\mathcal{A}$  is a sequence $p=q_1a_1q_2a_2 \ldots q_n a_n q_{n+1}$, where $(q_i,a_i,q_{i+1})$ is a transition for all $i=1, \ldots, n$; the {\it label} of this path is defined to be $a_1a_2 \ldots a_n \in A^*$, and for $n=0$ the label of the trivial path is defined to be the empty word. 
We denote by $p \mapright{u} q$ a path from $p$ to $q$ with label $u$.  
A path $p$ is {\it successful} if $q_1 \in I$ and $q_{n+1} \in T$, and the language over $A$ {\it accepted} by $\mathcal{A}$, denoted by $L(\mathcal{A})$, is defined to be the set of labels of all successful paths in $\mathcal{A}$. 
   Note that we can always suppose an automaton to be {\it trim}, i.e., such that every vertex belongs to some successful path, without changing the accepted language. We can also always suppose there is only one initial state. See \cite[Section I.4]{Ber}.
   \par 
    Note that any subgraph $C$ of $\mathcal{A}$ determines another automaton with the set of states consisting of the vertices of $C$, by restriction of transitions to the edge set of $C$, and restriction of the initial and terminal states to those which belong to $C$; we call this automaton a {\it subautomaton} of $\mathcal{A}$. If $\mathcal{B}$ is a subautomaton of $\mathcal{A}$, then it is immediate that $L(\mathcal{B}) \subseteq L(\mathcal{A})$, since every successful path in $\mathcal{B}$ gives rise to a successful path in $\mathcal{A}$ with the same label. 	
\par 
	
	\subsection{Rational and recognizable subsets}
	
	Suppose $M$ is a finitely generated monoid. In analogy to rational languages in a free monoid, one can define the set of {\it rational subsets} in $M$ to consist of all the subsets of $M$ which can be obtained from the finite ones by taking unions of two subsets, products of two subsets and Kleene star of a subset (i.e., passing to the submonoid generated by the given subset). In particular, rational subsets of free monoids are precisely the rational languages, and a subgroup of a group is a rational subset if and only if it is finitely generated (due to Anisimov and Seifert \cite[Theorem III.2.7]{Ber}). Let $\alpha: A^* \rightarrow M$ be a surjective homomorphism, for some finite alphabet $A$. It turns out that a subset $K$ of $M$ is rational if and only if there exists a rational language $L$ in $A^*$ such that $K=\alpha(L)$, see \cite[Proposition 1.7, p.223]{Sak}; this property is sometimes taken as the definition of a rational subset.
	\par 
	 We say that $X \subseteq M$ is a {\em recognizable subset} of $M$ if there exists a homomorphism $\theta:M \to K$ to some finite monoid $K$ satisfying $X = \theta^{-1}\theta(X)$.  Given a surjective homomorphism  $\alpha: A^* \rightarrow M$, it turns out that a subset $X$ of $M$ is recognizable if and only if $\alpha^{-1}(X)$ is a rational language \cite[Theorem 2.2, p.247]{Sak}. 
	 A subgroup of a group is a recognizable subset if and only if it has finite index \cite[Proposition 6.1, p.302]{Sak}. \par 
	 Equivalently, a subset $X$ of $M$ is recognizable if and only if the {\em syntactic congruence} $\sim_X$ has finite index (i.e., there are finitely many equivalence classes); this is the congruence on $M$ defined by $u \sim_X v$ if
$$\forall p,q \in M\, (puq \in X \Leftrightarrow pvq \in X);$$ 
see \cite[Theorem 2.3, p.247]{Sak}.
 	Let ${\rm Rat}(M)$ (respectively ${\rm Rec}(M)$) denote the set of all rational (respectively recognizable) subsets of $M$. For a finitely generated monoid $M$ we always have ${\rm Rec}(M) \subseteq {\rm Rat}(M)$ \cite[Proposition III.2.4]{Ber}. We say that $M$ is a {\em Kleene monoid} if ${\rm Rec}(M) = {\rm Rat}(M)$. This terminology arises from Kleene's Theorem stating that ${\rm Rec}(A^*) = {\rm Rat}(A^*)$ whenever $A$ is finite \cite[Theorem III.2.1]{Ber}, i.e., all rational languages are recognizable. 
	
	\if 0
	
	\subsection{Context-free languages and pushdown automata}	
	
	A {\em pushdown automaton} (abbreviated PDA) is a finite structure of the form ${\cal{A}} = (A,Q,q_0,T,S,s_0,\delta)$, where:

\begin{itemize}

\item

$A,Q,S$ are finite sets (called respectively: the input alphabet, the state set, the stack alphabet);

\item

$q_0 \in Q$, $T \subseteq Q$ and $s_0 \in S$ (called respectively: the initial state, the terminal states, the start symbol);

\item

$\delta$ is a finite subset of $Q \times (A \cup \{ 1 \}) \times S \times Q \times A^*$ (called the transition relation).




\end{itemize}

PDA's admit a combinatorial description inspired by finite automata: states are vertices and there exists a directed edge of the form

$$p \xrightarrow{a|s|u} q$$

whenever $(p,a,s,q,u) \in \delta$.

The {\em configuration space} of the above PDA is the direct product $Q \times S^*$. If $(p,a,s,q,u) \in \delta$ and $v \in A^*$, we write

$$(p,vs) \vdash_a (q,vu).$$

If $w = a_1\ldots a_n$ and $(p_{i-1},v_{i-1}) \vdash_{a_i} (p_i,v_i)$ for $i = 1,\ldots,n$ $(n \geq 0)$, we write

$$(p_0,v_0) \vdash_w (p_n,w_n).$$

The language recognized by ${\cal{A}}$ (by terminal states) is defined as

$$L({\cal{A}}) = \{ w \in A^* \mid (q_0,s_0) \vdash_w (t,v) \mbox{ for some }t \in T,\; v \in S^*\}.$$

The PDA ${\cal{A}} = (A,Q,q_0,T,S,s_0,\delta)$ is called {\em deterministic} (abbreviated DPDA) if:

\begin{itemize}

\item

$(p,a,s,q,u), (p,a,s,q',u') \in \delta$ implies $q = q'$ and $u = u'$;

\item

if $(p,1,s,q,u) \in \delta$, then $(\{ p \} \times A \times \{ s\} \times Q \times S^*) \cap \delta = \emptyset$.

\end{itemize}

Thus $\delta$ can be viewed as a function $Q \times (A \cup \{ 1\}) \times S \to Q \times S^*$. Moreover, in any configuration, the DPDA never has to face a choice between a move of type $\vdash_1$ or a move of type $\vdash_a$ $(a \in A)$.

A language $L \subseteq A^*$ is called {\em context-free} (respectively {\em deterministic context-free}) if $L = L({\cal{A}})$ for some PDA (respectively DPDA) ${\cal{A}}$. 

\fi

\section{Graded monoids and irreducible elements}

We now introduce graded monoids and some of their basic properties, following \cite{MMS}. \par 
\begin{definition}\cite[Definition 1.3]{MMS} 
	Let $M$ be a monoid. $M$ is called {\it graded} if it has a finite system of generators $S$ such that every element of $M$ can be written as a word over $S$ only in finitely many different ways. In other words, $M$ is graded if there exists a finite alphabet $X$ and an epimorphism $\alpha: X^* \rightarrow M$ such that $\alpha^{-1}(g)$ is finite for every $g \in M$. In particular, every graded monoid is finitely generated. \par 
\end{definition}
	
\begin{definition}\cite[Definition 1.5]{MMS}  A non-identity element of a monoid $M$ is called {\it irreducible} if it cannot be written as a product of non-identity elements of $M$. It follows immediately from the definition that any generating set of $M$ should contain all the irreducible elements of $M$, and that the set of irreducible elements of $M$ is equal to $(M \backslash \{ 1 \}) \backslash (M \backslash \{ 1 \})^2$. Given a f.g. submonoid $M$ of a virtually free group, one can compute the set of all irreducible elements of $M$ since  $(M \backslash \{ 1 \}) \backslash (M \backslash \{ 1 \})^2$ is an effectively constructible rational set.
\end{definition}
\par 
	
We summarize some known properties of graded monoids in the following lemma. The first statement basically means that the property of being graded does not depend on the choice of a finite generating set, provided this set does not contain the identity (note that no monoid can be graded with respect to a generating set containing the identity).	

\begin{lemma}\label{graded_properties}
Let $M$ be a graded monoid.
\begin{enumerate}
	\item If $S'$ is an arbitrary generating set of $M$ which does not contain the identity then 
	every element of $M$ can be written as a word over $S'$ only in finitely many ways.  
	\item $M$ has no (left, right) invertible elements apart from the identity.
	\item The set of irreducible elements of $M$ generates $M$.

\end{enumerate}
\end{lemma}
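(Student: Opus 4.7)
The plan is to exploit, throughout, the witness of gradedness: a finite alphabet $X$ together with an epimorphism $\alpha: X^* \to M$ having finite fibers. A preliminary observation is that no letter of $X$ can map to the identity, since otherwise inserting that letter arbitrarily often in any factorization of any $g \in M$ would yield infinitely many $X$-factorizations of $g$. Consequently every nonidentity element of $M$ admits $X$-factorizations of length at least $1$, and for each $g \in M$ the lengths of the words in $\alpha^{-1}(g)$ are bounded.

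For (1), the idea is to parametrize $S'$-factorizations of $g$ by $X$-factorizations of $g$ together with combinatorial partition data. Fix once and for all, for each $s' \in S'$, some $X$-factorization $\sigma(s') \in X^+$; this exists because $s' \neq 1$. The map $\Psi$ sending an $S'$-factorization $(s'_1, \ldots, s'_n)$ of $g$ to the word $\sigma(s'_1) \cdots \sigma(s'_n) \in \alpha^{-1}(g)$ is well defined. The preimage of a word $w$ of length $m$ injects into the set of partitions of $w$ into consecutive nonempty blocks $B_1, \ldots, B_n$, because each $B_i$ forces the corresponding factor to be $\alpha(B_i)$; hence the preimage has size at most $2^{m-1}$. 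Combined with the finiteness of $\alpha^{-1}(g)$ and the bound on its word lengths, this forces the number of $S'$-factorizations of $g$ to be finite.

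Statement (2) reduces to gradedness at the identity: if $g \in M \setminus \{1\}$ admits $h \in M$ with $gh = 1$, then $h \neq 1$ as well (otherwise $g = 1$); concatenating fixed $X$-factorizations of $g$ and $h$ repeated $n$ times yields an $X$-factorization of $(gh)^n = 1$ of length growing linearly in $n$, contradicting finiteness of $\alpha^{-1}(1)$. The left-inverse case is symmetric. Statement (3) then follows from the same length-bounding principle: any factorization $g = g_1 \cdots g_n$ into nonidentity factors expands, via chosen $X$-factorizations of the $g_i$, to an $X$-factorization of $g$ of length at least $n$, forcing $n$ to be bounded. Taking a factorization of maximal length, each factor must be irreducible, since any further nontrivial splitting would contradict maximality. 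The only real subtlety lies in (1), where one must carefully distinguish $S'$-factorizations from their $X$-expansions; the partition-counting step is what keeps the final bound finite even when $S'$ itself is allowed to be infinite.
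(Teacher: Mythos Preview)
Your proof is correct. The paper does not actually prove this lemma: its proof consists solely of the sentence ``The proof can be found in \cite{MMS}.'' So there is no in-paper argument to compare against directly. Your argument is self-contained and handles the one genuine subtlety in part (1) cleanly: since $S'$ is allowed to be infinite, a naive rewriting argument could in principle leave infinitely many preimages over a single word $w \in \alpha^{-1}(g)$, but your partition-counting bound $2^{m-1}$ depends only on the length $m$ of $w$ and not on $|S'|$, which is exactly what is needed.

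It is worth noting that the paper's Proposition~\ref{eqgrad}, proved independently a few pages later, recovers much of this content by essentially the same length-bounding idea you use. Its implication $1 \Rightarrow 2$ (graded $\Rightarrow$ finitely many nontrivial factorizations) is your part (1) specialized to $S' = M \setminus \{1\}$, argued just as you do by expanding factors into $X$-words and bounding lengths; and its equivalence $3 \Leftrightarrow 4$ together with $2 \Rightarrow 3$ yields your part (2) via the observation that a nontrivial one-sided inverse produces arbitrarily long factorizations of $1$. Your proof of (3) via a maximal-length factorization is the standard one. So although the paper defers the lemma itself to \cite{MMS}, the techniques you employ are exactly those the paper uses elsewhere.
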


\begin{proof}
The proof can be found in \cite{MMS}.
\end{proof}

Note that all f.g. submonoids of a free monoid are graded. In particular, all f.g. free monoids are graded. \par 
\begin{example}\label{ex1} There exists a graded submonoid of a free group which cannot be embedded in a free monoid. Consider, for example, the submonoid
 $M$ of $F(a,b)$ generated by $a, b, a^{-1} b^{-1} ab$. It does not embed into a free monoid because it satisfies the equality $yxz = xy$, where $x,y,z$ are the three generators $a,b, a^{-1} b^{-1} ab$ respectively, and $z \neq 1$.
 \par 
We now show that $M$ is graded. We refer to a word from $\{a,b\}^*$ as positive. 
We can use induction on the number of generators in the decomposition of an element in $M$ to see that the reduced form of any word in $M$ must either be a positive word or end in $b^{-1} a^kbu$, where $k > 0$ and $u$ is a positive word. Let $L$ denote the set of all elements of $F$ of this form: either positive or ending in $b^{-1} a^kbu$.
Then it is easy to see that for every element $g$ in $L$ and any generators $x_i,x_j$ of $M$ from the above generating set (possibly equal) the length of $gx_i$ in $F$ is not smaller than the length of $g$, and the length of $gx_ix_j$ in $F$ is strictly bigger than the length of $g$. Thus, if $h \in M$ and $h=x_1 \ldots x_n$ is a decomposition of $h$ as a product of the above generators, then the length of $h$ is at least $[n/2]$. This implies that if $h$ has length $s$ in $F$, then any decomposition of $h$ has at most $2s+1$ factors, in particular, there are finitely many such decompositions, so $M$ is graded.

\end{example}

\par 

The following example shows that conditions 2 and 3 in Lemma \ref{graded_properties} are necessary, but not sufficient for implying graded.

\begin{example}\label{ex2} There exists a f.g. submonoid $M$ of a free group which has the following properties: 
	\begin{enumerate}
		\item $M$ is not graded.
		\item $M$ has no (left, right) invertible elements apart from the identity. 
		\item The set of irreducible elements of $M$ generates $M$. 
	\end{enumerate}	
Consider, for example, the submonoid $M$ of the free group of rank 3, $F(a,b,c)$, generated by the elements $ba, c, c^{-1}a^{-1}, b^{-1}a^{-1}$. Note that $M$ is not graded, since $ba \cdot c \cdot c^{-1}a^{-1} \cdot b^{-1}a^{-1} = c \cdot c^{-1}a^{-1}$, and so $a^{-1}$ can be written as a product of generators of $M$ in infinitely many different ways: 
$$a^{-1}= (ba)^k \cdot c \cdot c^{-1}a^{-1} \cdot (b^{-1}a^{-1})^k, \quad k \geq 0.$$
\par

 We show now that $M$ has no invertible elements apart from the identity. Indeed, the product of two non-invertible elements in $M$ (or any other submonoid of a free group) is non-invertible: if $x, y \in M$, and $z=(xy)^{-1}=y^{-1}x^{-1} \in M$, then also $x^{-1}, y^{-1} \in M$. This means it suffices to prove that the generators  $ba, c, c^{-1}a^{-1}, b^{-1}a^{-1}$ of $M$ are non-invertible in $M$, i.e., the elements $a^{-1}b^{-1}, c^{-1}, ac, ab$ are not in $M$. Suppose that $a^{-1}b^{-1} \in M$, and $q,r,s,t$ are the numbers of times each of the generators $ba, c, c^{-1}a^{-1}, b^{-1}a^{-1}$ respectively occurs in some factorization of $a^{-1}b^{-1}$. Then $r=s$, since the sum of exponents of $c$'s should be equal to $0$, $t=q+1$, since the sum of exponents of $b$'s should be equal to $-1$, and  $s+t=q+1$, since the sum of exponents of $a$'s should be equal to $-1$. It follows that $r=s=0$.  But any product with factors in $\{ ba, b^{-1} a^{-1}\}$ is necessarily reduced, so we can never obtain $a^{-1}b^{-1}$, a contradiction, so $a^{-1}b^{-1} \notin M$. The other cases are similar. Note that invertible equals left (right) invertible since $M$ embeds in a group.
 \par

It remains to prove the third claim. Indeed, we claim that all the elements  $ba, c, c^{-1}a^{-1}, b^{-1}a^{-1}$ are irreducible.
Note that for each of these generators the first letter occurs only in this generator, but not in the rest (here we consider $a$ and $a^{-1}$ as different letters). Suppose  $ba=y_1y_2 \ldots y_m$ for some generators $y_1, \ldots, y_m$ from the above generating set. Since $b$ does not occur in any of the generators apart from $ba$, at least one of the generators $y_1, \ldots, y_m$ should be $ba$, and we can suppose that $y_i=ba$ and there is a reduction of the word $y_1 \ldots y_m$ to the reduced word $ba$ such that the letter $b$ from $y_i$ does not get cancelled, for some $1 \leq i \leq m$. But then $y_1 \ldots y_{i-1}=1$, and since $M$ has no non-trivial invertible elements, as proved above, we have $i=1$. Now we have $y_2 \ldots y_m=1$, so $m=1$. This shows that $ba$ is irreducible. The other cases are similar. Thus $M$ is generated by the set of its irreducible elements.

\end{example}

 \par 
 
 The next example shows that conditions 2 and 3 in Lemma \ref{graded_properties} are not equivalent either.
   
\begin{example}\label{ex3} There exists a f.g. submonoid $M$ of a free group such that both claims hold for $M$:
\begin{enumerate}
\item $M$ has no (left, right) invertible elements apart from the identity.
\item The set of irreducible elements of $M$ does not generate $M$. 
\end{enumerate}
For example, take $M$ to be the submonoid of the free group $F(a,b)$ generated by $a, ba^{-1}, b^{-1} a$. To prove the first claim, we only need to show that none of the generators is invertible, since the product of non-invertible elements is non-invertible, as in the previous example. That is, we need to check that $a^{-1}, ab^{-1}, a^{-1} b \notin M$. Suppose $ab^{-1} \in M$. Let $r,s,t$ be the number of times each of the generators $a, ba^{-1}, b^{-1} a$ respectively occurs in some particular factorization of $ab^{-1}$. Then $t = s+1$, since the sum of exponents of $b$'s should be equal to $-1$. It follows  that $r = 0$, since the sum of exponents of $a$'s should be equal to $1$. But any product with factors in $\{ ba^{-1}, b^{-1} a\}$ is necessarily reduced, so we can never obtain $ab^{-1}$, a contradiction, so $ab^{-1} \notin M$. The other cases are similar. Thus $M$ has no invertible elements apart from the identity.
\par 
Now we show that $M$ is not generated by the set of its irreducible elements. Indeed, the set of irreducible elements of $M$ must be contained in any generating set of $M$, in particular, in $\{a, ba^{-1}, b^{-1} a \}$. However, $a$ is not irreducible in $M$, since $a= ba^{-1} \cdot a \cdot b^{-1}a$, and the set $\{ ba^{-1}, b^{-1}a \}$ clearly does not generate $M$, since any product with factors in $\{ ba^{-1}, b^{-1} a\}$ is necessarily reduced and so cannot be equal to $a$. This shows that both claims hold for $M$.

\end{example}

\par 
\begin{example}\label{ex4}
There exists a graded submonoid of a free group which is not finitely presented. Indeed, every f.g. submonoid of a free monoid is graded and can be embedded in a free group, but not every f.g. submonoid of a free monoid is finitely presented.

 The following example is well-known, we include it for the sake of completeness. Consider the submonoid $M$ of the free monoid $\{a,b,c,d\}^*$ generated by $ab,ad,ba,c,ca,d$ (denoted by $x_1,\ldots,x_6$ respectively). A relation $x_{i_1}\ldots x_{i_m} = x_{j_1}\ldots x_{j_n}$ holding in $M$ is called minimal if $i_1 \neq j_1$ and $x_{i_1}\ldots x_{i_r} = x_{j_1}\ldots x_{j_s}$ holding in $M$ with $0 \leq r \leq m$ and $0 \leq s \leq n$ implies either $r = s = 0$ or $r = m$, $s = n$. Since free monoids are cancellative, a presentation of $M$ can be obtained by considering all minimal relations on $x_1,\ldots,x_6$. It is easy to see that these are the relations of the form $x_4 x_1^k x_2 = x_5 x_3^k x_6$ for $k \geq 0$, hence $M$ is presented by
$$Mon\langle x_1,\ldots,x_6 \mid x_4 x_1^k x_2 = x_5 x_3^k x_6, \: k \geq 0 \rangle.$$
If $M$ were finitely presented, it would admit a presentation of the form
$$Mon\langle x_1,\ldots,x_6 \mid x_4 x_1^k x_2 = x_5 x_3^k x_6, \: 0 \leq k \leq t \rangle.$$
However, it is immediate that $x_4 x_1^{t+1} x_2 = x_5 x_3^{t+1} x_6$ cannot be derived from this finite set of relations. Thus $M$ is not finitely presented.

\end{example}

\begin{example}\label{ex5}
Not every graded submonoid of a virtually free group embeds into a free group. Indeed, take $G = \langle a,b \: | \: b^2 = 1, ab = ba \rangle$ and let $M$ be the submonoid of $G$ generated by $a$ and $ab$. Then it is easy to see that $M$ is graded, but does not embed into a free group since $a$ and $ab$ commute and have the same square.
\end{example}

In \cite{MMS}, the authors provide several alternative characterizations for graded monoids. We add a few, and in order to do so we need to introduce a few semigroup-theoretic concepts.

Let $M$ be a monoid and $u \in M$. A {\em nontrivial factorization} of $u$ is a sequence $(u_1,\ldots,u_n)$ of elements of $M \setminus \{ 1\}$ (for some $n \geq 0$) such that $u = u_1\ldots u_n$. The element $u$ is {\em regular} (respectively {\em idempotent}) if $u \in uMu$ (respectively $u^2 = u$). If $MuM \subseteq MvM$ (equivalently, if $u \in MvM$), we say that $v \in M$ is a {\em factor} of $u$ (and we write $u \leq_{\cal{J}} v$). The quasi-order $\leq_{\cal{J}}$ is called in the literature the ${\cal{J}}$-{\em order} of $M$. A monoid is {\em finite} $\cal{J}$-{\em above} if every element of $M$ has only finitely many factors.

\begin{proposition}

\label{eqgrad}

The following conditions are equivalent for a f.g. monoid $M$:

\begin{enumerate}

\item $M$ is graded.

\item Every element of $M$ has only finitely many nontrivial factorizations.

\item $M$ is finite $\cal{J}$-above and $1$ is the unique regular element of $M$.

\item $M$ is finite $\cal{J}$-above, and $1$ is the unique invertible element and the unique idempotent of $M$.

\end{enumerate}

\end{proposition}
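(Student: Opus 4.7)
The plan is to prove $(1) \Leftrightarrow (2)$ directly and then close the loop $(2) \Rightarrow (3) \Rightarrow (4) \Rightarrow (2)$, so that all four conditions become equivalent. The main technical step, which I expect to be the principal obstacle, is $(4) \Rightarrow (2)$, where I must manufacture an idempotent from a sufficiently long nontrivial factorization via a pumping argument and then invoke the uniqueness of the idempotent and invertible element.

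For $(1) \Leftrightarrow (2)$, fix a finite generating set $S \subseteq M \setminus \{1\}$ (no loss of generality by Lemma \ref{graded_properties}(1)) and let $\alpha : S^* \to M$ be the evaluation map. For $(1) \Rightarrow (2)$, every pair consisting of a word $w \in \alpha^{-1}(u)$ together with a cut pattern making each block have nontrivial image determines a nontrivial factorization of $u$, and every nontrivial factorization arises this way by choosing any $w_i \in \alpha^{-1}(u_i) \cap S^+$ and concatenating; since $\alpha^{-1}(u)$ is finite and each word admits only finitely many cut patterns, this surjection forces the set of nontrivial factorizations to be finite. For the converse, a word $s_1 \cdots s_n \in \alpha^{-1}(u)$ is already a nontrivial factorization of $u$ (as $1 \notin S$), so finitely many nontrivial factorizations forces $\alpha^{-1}(u)$ to be finite.

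For $(2) \Rightarrow (3)$, any factor $v \notin \{1, u\}$ appears as an entry in a nontrivial factorization of $u$ of length $2$ or $3$, so infinitely many factors would contradict (2). For the regular-element clause, assume $u = upu$ with $u \neq 1$. If $pu \neq 1$, then $u \cdot (pu) = u$ iterates to $u = u(pu)^n$, giving infinitely many nontrivial factorizations $(u, pu, \ldots, pu)$ of $u$. If $pu = 1$, then $e = up$ is idempotent since $e^2 = u(pu)p = up = e$: when $e \neq 1$, the tuples $(e, \ldots, e)$ are infinitely many nontrivial factorizations of $e$; when $e = 1$, the elements $u$ and $p$ are mutually inverse with $p \neq 1$ (else $pu = u \neq 1$), and $(u, p, \ldots, u, p)$ yields infinitely many nontrivial factorizations of $1$. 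Each case violates (2). The implication $(3) \Rightarrow (4)$ is immediate: any invertible $u$ is regular via $u = u u^{-1} u$, and any idempotent $u$ is regular via $u = u \cdot 1 \cdot u$.

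The obstacle is $(4) \Rightarrow (2)$. Suppose some $u$ has infinitely many nontrivial factorizations. Every entry is a factor of $u$, hence lies in the finite set $F$ of factors, so factorizations of unbounded length must exist (tuples of bounded length over a finite alphabet form a finite set). Pick one of length $n > |F|^2$ and consider the prefix-suffix pairs $(p_i, q_i) = (u_1 \cdots u_i,\, u_{i+1} \cdots u_n) \in F \times F$; pigeonhole produces $i < j$ with $(p_i, q_i) = (p_j, q_j)$. Setting $w = u_{i+1} \cdots u_j$, one has $p_i w = p_i$ and $w q_j = q_j$, hence $u = p_i w^k q_j$ for all $k \geq 0$, so every $w^k$ is a factor of $u$ and the cyclic subsemigroup $\langle w \rangle$ is finite. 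Any finite cyclic semigroup contains an idempotent $w^m$ with $m \geq 1$; by (4) it equals $1$, making $w$ invertible, so $w = 1$ by (4) again. The delicate step is to rule out $w = 1$ beforehand: if $j = i + 1$, then $w = u_{i+1} \neq 1$ by nontriviality, while if $j > i + 1$, setting $a = u_{i+1}$ and $b = u_{i+2} \cdots u_j$, the equation $ab = 1$ combined with $a \neq 1$ forces $b \neq 1$, then $(ba)^2 = b(ab)a = ba$ is idempotent and equals $1$ by (4), so $a$ is two-sided invertible and thus $a = 1$ by (4), contradicting nontriviality. This closes the cycle.
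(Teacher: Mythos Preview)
Your proof is correct and follows essentially the same pumping strategy as the paper: both use finite-$\mathcal{J}$-above to force a repeat among prefix products of a long nontrivial factorization and then extract a forbidden idempotent/regular/invertible element from the repeated segment. The paper differs only in minor tactics---it closes the cycle via $(4)\Rightarrow(3)\Rightarrow(2)$ rather than your direct $(4)\Rightarrow(2)$ (going through (3) lets it pigeonhole on prefixes alone rather than prefix--suffix pairs, and dispose of the degenerate $w=1$ case in one line by noting $u_j = u_j(u_{i+1}\cdots u_{j-1})u_j$ is regular), and in $(2)\Rightarrow(3)$ it case-splits on whether $p=1$ rather than whether $pu=1$, which is slightly shorter.
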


\begin{proof}

$1 \Rightarrow 2$. Let $X$ be a finite alphabet and let $\varphi:X^* \to M$ be a surjective monoid homomorphism such that $\varphi^{-1}(u)$ is finite for every $u \in M$.

Suppose first that there exist arbitrarily long nontrivial factorizations $(u_1,\ldots,u_n)$ of $u$. Take $v_i \in \varphi^{-1}(u_i)$ for every $i$. Since $u_i \neq 1$, it follows that $v_1\ldots v_n \in \varphi^{-1}(u)$ and $|v_1\ldots v_n| \geq n$, thus $\varphi^{-1}(u)$ would be infinite, a contradiction.

Therefore there exists a constant $K \in \mathbb{N}$ such that every nontrivial factorization $(u_1,\ldots,u_n)$ of $u$ satisfies $n \leq K$. Let $U$ be the set of all elements of $M$ occurring in some nontrivial factorization of $u$.

Suppose that $U$ is infinite. Then $\varphi^{-1}(U)$ contains arbitrarily long words and so does $\varphi^{-1}(u)$, a contradiction. Hence $U$ is finite and in view of the bound $K$ we deduce that $u$ has only finitely many nontrivial factorizations.

$2 \Rightarrow 1$. Straightforward.

$2 \Rightarrow 3$. Every factor of $u \in M$ is either the identity or occurs in some nontrivial factorization of $u$. Therefore $u$ has only finitely many factors and $M$ is finite $\cal{J}$-above.

Suppose that $u \in M\setminus \{ 1\}$ is regular. Then $u = uvu$ for some $v \in M$. If $v = 1$ (respectively $v \neq 1$), then $(u,u,\ldots,u)$ (respectively $(u,v,u,\ldots,v,u)$) provides arbitrarily long nontrivial factorizations of $u$, a contradiction. Thus $1$ is the unique regular element of $M$.

$3 \Rightarrow 4$. Since idempotents and invertible elements are regular.

$4 \Rightarrow 3$. Suppose that $u \in M$ is regular. Then $uvu = u$ for some $v \in M$. Since $uv$ and $vu$ are both idempotents, it follows that $uv = vu = 1$. But then $u$ is invertible and so $u = 1$.

$3 \Rightarrow 2$. Let $u \in M$ have $n$ factors. Suppose that $u$ admits infinitely many factorizations. Then $u$ admits a nontrivial factorization $(u_1,\ldots,u_m)$ of length $m > n$. It follows that $u_1\ldots u_j$ is a factor of $u$ for $j = 1,\ldots,n+1$. Since $u$ has $n$ factors in $M$, it follows that there exist $1 \leq p < q \leq n+1$ such that $u_1\ldots u_p = u_1\ldots u_q$. Write $x = u_1\ldots u_p$ and $y = u_{p+1}\ldots u_q$. Then $xy = x$.

Suppose first that $y \neq 1$. Since $xy = x$ implies $xy^k = x$ for every $k \geq 1$, then $y^k$ is a factor of $x$ for every $k$. Since $M$ is finite $\cal{J}$-above, we get $y^r = y^s$ for some $1 \leq r < s$. Hence $y^r = y^{r +(s-r)} = y^{r +(s-r)r} = y^r y^{(s-r-1)r}y^r$ and so $y^r$ is regular. It follows that $y^r = 1$ and so $y = yy^{r-1}y$. Therefore $y \neq 1$ is regular, a contradiction.

Assume now that $y = 1$. Then $u_q = u_q(u_{p+1}\ldots u_{q-1})u_q \neq 1$ is regular, also a contradiction. Therefore $u$ admits only finitely many factorizations.

\end{proof}

Note that none of the two last conditions in 4 can be removed: for counterexamples, we can take any finite nontrivial group and any finite nontrivial semilattice (commutative monoid consisting of idempotents).

We can now apply Proposition \ref{eqgrad} to the case of submonoids of a free group.

\begin{corollary}

\label{eqgradfree}

The following conditions are equivalent for a f.g. submonoid $M$ of a free group:

\begin{enumerate}

\item $M$ is graded.

\item $M$ is finite $\cal{J}$-above.

\end{enumerate}

\end{corollary}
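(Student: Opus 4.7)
The plan is to reduce the corollary to Proposition~\ref{eqgrad} by exploiting the fact that $M$ embeds in a free group. The implication $1 \Rightarrow 2$ is immediate from item~3 of that proposition, so the real content is the converse: finite $\mathcal{J}$-above already suffices for being graded in this setting. For this I would invoke condition~4 of Proposition~\ref{eqgrad}, which requires me to verify, in addition to finite $\mathcal{J}$-above, that $1$ is both the unique idempotent and the unique invertible element of $M$.

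The idempotent condition is automatic. Any idempotent $u \in M$ satisfies $u^2 = u$ in the ambient free group, and since free groups are torsion-free this forces $u = 1$.

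The main step is therefore ruling out non-trivial invertible elements of $M$, using only that $M$ is finite $\mathcal{J}$-above. If some $u \in M \setminus \{1\}$ were invertible in $M$, then $u^{-1} \in M$ as well, so every power $u^k$ with $k \in \mathbb{Z}$ lies in $M$. These powers are pairwise distinct in the ambient free group by torsion-freeness, and each satisfies $1 = u^k \cdot u^{-k} \in M u^k M$, so $u^k$ is a factor of $1$ in the $\mathcal{J}$-order on $M$. Thus $1$ would admit infinitely many factors, contradicting the finite $\mathcal{J}$-above hypothesis. Applying Proposition~\ref{eqgrad}(4) then yields that $M$ is graded.

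I do not expect any real obstacle here; once the route through Proposition~\ref{eqgrad}(4) is fixed, both remaining checks are short and depend only on the torsion-freeness of the ambient group together with the standing hypothesis.
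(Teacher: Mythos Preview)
Your proof is correct and follows exactly the paper's route through Proposition~\ref{eqgrad}(4), supplying the details the paper leaves implicit for the invertible-element step. One tiny remark: for the idempotent part you do not even need torsion-freeness, since $u^2=u$ already forces $u=1$ in any group by cancellation (the paper phrases it this way).
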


\begin{proof}

Suppose that $M$ is finite $\cal{J}$-above. Since $M$ is a submonoid of a group, $1$ is the unique idempotent of $M$, and since free groups are torsion-free, the fact that $M$ is finite $\cal{J}$-above implies that $1$ is the unique invertible element of $M$.
Now the claim follows immediately from the equivalence of the first and fourth conditions in Proposition \ref{eqgrad}. 



\end{proof}

We remark that we cannot remove f.g. from the statement of the corollary since a free monoid of infinite rank embeds in any free group of rank $> 1$.

\vspace{0.2 cm}

\section{A geometric characterization}

We provide in this section a geometric characterization of graded submonoids of virtually free groups using the concept of quasi-geodesic in the Cayley graph of the ambient group. We start by recalling some concepts from the theory of hyperbolic groups.

A {\em geodesic metric space} is a metric space $(X,d)$ such that, for all $x,y \in X$, there exists an isometry $\lambda$ from $[0,d(x,y)] \subset \mathbb{R}$ into $X$ such that $\lambda(0) = x$ and $\lambda(d(x,y)) = y$. Then we say that $\lambda$ is a {\em geodesic} from $x$ to $y$. We also call its image a geodesic, often denoted by $[x,y]$. 

A {\em geodesic} $p$-gon is a union of geodesics
$$[x_0,x_1] \cup [x_1,x_2] \cup \ldots \cup [x_{p-1},x_p]$$
in $(X,d)$ with $x_p = x_0$. This geodesic polygon is $K$-{\em thin} if for every side of it every point on this side is at distance $\leq K$ from some point on one of the other sides.

Then $(X,d)$ is {\em hyperbolic} if there exists some constant $K \geq 0$ such that every geodesic triangle in $(X,d)$ is $K$-thin. We say that $(X,d)$ is {\em polyhyperbolic} if there exists some constant $K \geq 0$ such that, for every $p \geq 2$, every geodesic $p$-gon in $(X,d)$ is $K$-thin. 

Let $[a,b] \subset \mathbb{R}$ and $\lambda \geq 1$, $\varepsilon \geq 0$. A mapping $\varphi:[a,b] \to X$ is a $(\lambda,\varepsilon)$-{\em quasi-geodesic} if
$$\frac{1}{\lambda}|x-y| - \varepsilon \leq d(\varphi(x),\varphi(y)) \leq \lambda |x-y| + \varepsilon \mbox{ holds for all }x,y \in [a,b].$$

Now let $Cay_A(G)$ be the Cayley graph of a f.g. group $G$ with respect to a fixed finite generating set $A$. The {\em geodesic distance} $d_A$ between vertices $g,h \in G$  is defined as the length of the shortest path connecting them. Since $d_A$ takes values in $\mathbb{N}$, $(G,d_A)$ is not a geodesic metric
space. However, we can remedy that by embedding $(G,d_A)$
isometrically into the {\em geometric 
realization} $\overline{Cay}_A(G)$ of $Cay_A(G)$, where vertices become
points and edges 
become segments of length 1 in some (euclidean) space, intersections
being determined by adjacency only. With the obvious metric,
$\overline{Cay}_A(G)$ is a geodesic metric space, and the geometric 
realization is unique up to isometry. We denote also by $d_A$ the
induced metric on $\overline{Cay}_A(G)$.

By \cite[Theorem 4.1]{AS}, $\overline{Cay}_A(G)$ is {\em polyhyperbolic} if and only if $G$ is virtually free. 

Suppose now that $M$ is a submonoid of $G$ generated by a finite set $S \subseteq G \setminus \{ 1 \}$. From this point onwards, we denote by $S^*$ the free monoid on $S$ (viewed as an alphabet) and not the submonoid of $G$ generated by $S$ (which is denoted by $M$). We denote by $Cont(S,G,A)$ the set of all mappings of the following form: for some $v = v_1\ldots v_n \in S^*$ $(v_i \in S)$, $\varphi_v:[0,n] \to \overline{Cay}_A(G)$ is a mapping such that:
\begin{itemize}
\item
$\varphi_v(i) = v_1\ldots v_i$ for $i = 1,2,\ldots,n$,  $\varphi_v(0)=1$;
\item
the interval $[i-1,i]$ is mapped homeomorphically into a geodesic $[\varphi_v(i-1),\varphi_v(i)]$ for $i = 1,2,\ldots,n$.
\end{itemize}
We call the elements of $Cont(S,G,A)$ {\em continuous paths} from $S^*$ in $\overline{Cay}_A(G)$.

We say that $M$ is a {\em quasi-geodesic} submonoid of $G$ with respect to $A$ if there exist $\lambda \geq 1$ and $\varepsilon \geq 0$ such that $\varphi$ is a $(\lambda,\varepsilon)$-quasi-geodesic of $(\overline{Cay}_A(G),d_A)$ for every $\varphi \in Cont(S,G,A)$.

Finally, we introduce some helpful mappings. Given a finite set of generators $S$ of a graded monoid $M$ and $u \in M$, write
$$\xi_S(u) = \cup_{n \geq 1} \{ (s_1,\ldots, s_n) \in S^n \mid s_1\ldots s_n = u
\}, \quad \Xi_S(u) = \max\{ n \geq 1 \mid \xi_S(u) \cap S^n \neq \emptyset \}.$$
If $A$ is a finite generating set for the ambient group $G$, we define also a mapping $\zeta_{S,A}:\mathbb{N} \to \mathbb{N}$ by
$$\zeta_{S,A}(n) = \max\{ \Xi_S(u) \mid u \in M,\, d_A(1,u) \leq n\}.$$

\begin{theorem}
\label{qg}
Let $G$ be a virtually free group generated by a finite set $A$. Let $M$ be a submonoid of $G$ generated by a finite set $S \subseteq G \setminus \{ 1 \}$. 
Then the following conditions are equivalent:
\begin{itemize}
\item[(i)] $M$ is graded;
\item[(ii)] $M$ is a quasi-geodesic submonoid of $G$ with respect to $A$.
\end{itemize}
\end{theorem}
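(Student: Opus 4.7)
The easy direction (ii) $\Rightarrow$ (i) follows from the quasi-geodesic lower bound. Given $u \in M$ with factorization $u = s_1 \cdots s_n$ in $M$, the path $\varphi_v \in Cont(S,G,A)$ for $v = s_1 \cdots s_n$ satisfies $d_A(1,u) = d_A(\varphi_v(0), \varphi_v(n)) \geq n/\lambda - \varepsilon$, so $n \leq \lambda(d_A(1,u) + \varepsilon)$. Since $S$ is finite, each element of $M$ has only finitely many factorizations, so $M$ is graded.

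For (i) $\Rightarrow$ (ii), I start from the observation that a graded monoid admits no nontrivial factorization of $1$ (otherwise, concatenating copies yields infinitely many factorizations of $1$), and consequently the partial products $p_i = s_1 \cdots s_i$ of any factorization $u = s_1 \cdots s_N$ in $M$ are pairwise distinct (for if $p_i = p_j$ with $i < j$ then $s_{i+1} \cdots s_j = 1$ would be a nontrivial factorization of $1$). The function $\zeta_{S,A}(n) = \max\{\Xi_S(u) : u \in M,\, d_A(1,u) \leq n\}$ is then finite-valued, because each ball of radius $n$ in the f.g.\ group $G$ is finite and each $u \in M$ has $\Xi_S(u) < \infty$ by gradedness. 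The plan is to reduce the quasi-geodesic property to a linear growth bound $\zeta_{S,A}(n) \leq Cn + D$: indeed, for any $v = v_1 \cdots v_n \in S^*$ and integers $0 \leq i \leq j \leq n$, the element $p_i^{-1} p_j = v_{i+1} \cdots v_j \in M$ admits a length-$(j - i)$ factorization, so such a bound gives $j - i \leq \zeta_{S,A}(d_A(p_i, p_j)) \leq C\, d_A(p_i, p_j) + D$, the required quasi-geodesic lower bound. The matching upper bound $d_A(\varphi_v(x), \varphi_v(y)) \leq L\, |x-y|$ with $L = \max_{s \in S} |s|_A$ is trivial, and non-integer $x, y$ are handled by rounding at the cost of an additive constant.

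The crucial step is therefore the linear growth bound on $\zeta_{S,A}$, where polyhyperbolicity of $\overline{Cay}_A(G)$ (guaranteed by virtual freeness of $G$ via \cite[Theorem 4.1]{AS}) enters. Fix $u \in M$ with $d_A(1,u) = n$ and a factorization $u = s_1 \cdots s_N$ of maximum length. Form the geodesic $(N+1)$-gon $P$ with $N$ short sides $[p_{i-1}, p_i]$ (length $\leq L$) and one closing geodesic $\sigma_0 = [p_N, p_0]$ of length $n$. By $K$-polyhyperbolicity, every point on any side of $P$ lies within $K$ of some other side. Grouping short sides into blocks of combined length $\geq 2K$ (so that interior points in each block are not trivially close to adjacent sides), a suitable case analysis of the partial products splits them into: (a) those within bounded distance of $\sigma_0$, which lie in a strip of width $O(K + L)$ along a length-$n$ geodesic and hence form at most $O(n)$ distinct group elements by finiteness of balls in $G$; and (b) those within $K + L$ of some non-adjacent $p_j$, forcing $p_i^{-1} p_j \in M$ to sit in the ball of radius $K + L$ around $1$ and hence $|i - j| \leq \zeta_{S,A}(K + L) =: Z$, a constant depending only on $G, A, S$.

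The main obstacle I foresee is converting case (b) into a linear contribution. The immediate constraint that each case (b) vertex has a close $G$-partner at bounded index-distance $\leq Z$ does not by itself bound their total number: the case (b) vertices could in principle form long chains of clustered partial products away from $\sigma_0$. One must therefore propagate the polyhyperbolic matching further, either by iterating the $K$-thin property through chains of short sides until reaching $\sigma_0$ with controlled depth, or by a more global polyhyperbolic argument ensuring every case (b) cluster attaches to a segment of $\sigma_0$. Once this propagation is controlled by constants depending only on $K, L, Z$ and the size of a fixed ball in $G$, the case (b) count becomes $O(n)$, and combining with case (a) yields $N \leq Cn + D$, completing the proof.
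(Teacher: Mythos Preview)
Your direction (ii)~$\Rightarrow$~(i) is correct and essentially identical to the paper's argument.

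For (i)~$\Rightarrow$~(ii), however, there is a genuine gap precisely where you flag it: your case (b) is not resolved, and the vague plan to ``propagate the polyhyperbolic matching'' does not obviously terminate. Knowing that each case~(b) vertex $p_i$ has a nearby partner $p_j$ with $|i-j|\le Z$ gives no control on how many case~(b) vertices there are; nothing in your setup rules out arbitrarily long runs of them. Your blocking trick also runs into the problem that grouped sides are no longer geodesics, so the polyhyperbolic thinness statement no longer applies to the grouped polygon.

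The paper avoids all of this by applying the $K$-thinness in the \emph{opposite} direction. Rather than asking, for each partial product $p_i$, which other side it is close to (your approach, leading to the bad case~(b)), the paper walks along the closing geodesic $[w_j,w_i]$ vertex by vertex: write it as $w_j=q_0,q_1,\dots,q_r=w_i$ with $r=d_A(w_i,w_j)$. For each $q_\ell$, polyhyperbolicity supplies some index $z_\ell\in\{i,\dots,j\}$ with $d_A(q_\ell,w_{z_\ell})\le K+L/2$; set $z_0=j$, $z_r=i$. Then consecutive $w_{z_{\ell-1}},w_{z_\ell}$ are at distance $\le 2K+L+1$, so $|z_\ell-z_{\ell-1}|\le \zeta_{S,A}(2K+L+1)=:L'$ (a \emph{single fixed value} of $\zeta_{S,A}$, not its growth rate). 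By the triangle inequality,
\[
|j-i|=|z_r-z_0|\le\sum_{\ell=1}^r|z_\ell-z_{\ell-1}|\le L'\,r=L'\,d_A(w_i,w_j),
\]
which is exactly the quasi-geodesic lower bound at integer points. The passage to arbitrary real $x,y$ is then the routine rounding argument you already describe.

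The conceptual point is that your case split forces you to account for all $N$ partial products, most of which may be far from $\sigma_0$; the paper instead only needs to account for the $r=d_A(w_i,w_j)$ vertices of the closing geodesic, each of which is automatically close to \emph{some} partial product (there is no ``bad case'' on this side), and the linear bound falls out immediately. In particular, the paper never proves that $\zeta_{S,A}$ grows linearly as an intermediate step; it only ever evaluates $\zeta_{S,A}$ at the single radius $2K+L+1$.
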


\begin{proof}
(i) $\Rightarrow$ (ii). Let $K$ be a polyhyperbolicity constant for $(\overline{Cay}_A(G),d_A)$.
Let $\varphi \in Cont(S,G,A)$ be defined as above for $v = v_1\ldots v_n \in S^*$. Write $w_k = v_1\ldots v_k$ for $k = 0,1,\ldots,n$.
Let
\begin{itemize}
\item
$L = \max\{ d_A(1,s) \mid s \in S \}$,
\item
$L' = \zeta_{S,A}(2K+L+1)$,
\item
$\lambda = \max\{ L,L'\}$,
\item
$\varepsilon = \max\{ 3L,  2L + \frac{1}{L'} \}$.
\end{itemize}

Suppose that $i,j \in \{ 0,1,\ldots,n \}$ with $i < j$. Consider a geodesic polygon in $\overline{Cay}_A(G)$ of the form
$$[w_i,w_{i+1}] \cup [w_{i+1},w_{i+2}] \cup \ldots \cup [w_{j-1},w_{j}] \cup [w_j,w_i].$$
Assume that $[w_j,w_i]$ is the geodesic
$$w_j = q_0 \mapright{a_1} q_1 \mapright{a_2} \ldots \mapright{a_r} q_r = w_i$$
with $a_1,\ldots, a_r \in A \cup A^{-1}$. Let $\ell \in \{ 1,\ldots,r-1\}$. By definition of $K$, there exist $t \in \{ i,\ldots,j-1\}$ and $x \in [w_t,w_{t+1}]$ such that $d_A(q_{\ell},x) \leq K$. Since $d_A(w_t,w_{t+1}) = d_A(1,v_{t+1}) \leq L$, there exists some $z_{\ell} \in \{ i,\ldots,j\}$ such that $d_A(q_{\ell},w_{z_{\ell}}) \leq K+\frac{L}{2}$. Fix also $z_0 = j$ and $z_r = i$. We have 
$$d_A(w_{z_{\ell-1}},w_{z_{\ell}}) \leq d_A(w_{z_{\ell-1}},q_{\ell-1}) + d_A(q_{\ell-1},q_{\ell}) + d_A(q_{\ell},w_{z_{\ell}}) \leq 2K+L+1,$$
hence
$$|z_{\ell} - z_{\ell-1}| \leq \Xi_S(w_{z_{\ell-1}}^{-1} w_{z_{\ell}}) \leq L'$$
for $\ell = 1,\ldots,r$. It follows that 
\begin{equation}
\label{qg1}
|j-i| = |z_r - z_0| \leq \sum_{\ell = 1}^r |z_{\ell} - z_{\ell-1}| \leq L' r = L' d_A(w_j,w_i) = L' d_A(\varphi(j),\varphi(i)).
\end{equation}

On the other hand,
\begin{equation}
\label{qg2}
d_A(\varphi(j),\varphi(i)) = d_A(w_j,w_i) \leq \sum_{\ell = i+1}^{j} d_A(w_{\ell-1},w_{\ell}) = \sum_{\ell = i+1}^{j} d_A(1,v_{\ell})
\leq L|j-i|.
\end{equation}

Now, given $x,y \in [0,n]$, (\ref{qg2}) yields
\begin{equation}
\label{qg3}
\begin{array}{lll}
d_A(\varphi(x),\varphi(y))&\leq&d_A(\varphi(x),\varphi(\lfloor x\rfloor )) + d_A(\varphi(\lfloor x\rfloor ),\varphi(\lfloor y\rfloor )) + d_A(\varphi(\lfloor y\rfloor ),\varphi(y))\\
&\leq&L|\lfloor x\rfloor - \lfloor y\rfloor | +2L \leq L|x-y| + 3L.
\end{array}
\end{equation}

On the other hand, (\ref{qg1}) yields
$$\begin{array}{lll}
|x-y|&\leq&|\lfloor x\rfloor - \lfloor y\rfloor | +1 \leq L'd_A(\varphi(\lfloor x\rfloor ),\varphi(\lfloor y\rfloor )) +1\\
&\leq&L'(d_A(\varphi(\lfloor x\rfloor ),\varphi(x)) + d_A(\varphi(x),\varphi(y)) + d_A(\varphi(y),\varphi(\lfloor y\rfloor )))+1\\
&\leq&L'd_A(\varphi(x),\varphi(y)) + 2LL'+1.
\end{array}$$
and therefore
\begin{equation}
\label{qg4}
\frac{1}{L'}|x-y| - (2L + \frac{1}{L'}) \leq d_A(\varphi(x),\varphi(y)).
\end{equation}

By definition of $\lambda$ and $\varepsilon$, it follows from (\ref{qg3}) and (\ref{qg4}) that $\varphi$ is a $(\lambda,\varepsilon)$-quasi-geodesic of $(\overline{Cay}_A(G),d_A)$.

(ii) $\Rightarrow$ (i). If $M$ is not graded, there exists some $x \in M$ represented by infinitely many words $v \in S^*$. Since $S$ is finite, the length of such $v$ is unbounded, which prevents the existence of $\lambda, \varepsilon$ satisfying
$$\frac{1}{\lambda}\big{\lvert}\,|v|-0\,\big{\lvert} - \varepsilon \leq d_A(\varphi(|v|),\varphi(0)) = d_A(x,1)$$
for every $v$.

Note that this implication holds for every f.g. group $G$.
\end{proof}

It follows from Theorem \ref{qg} that, for a virtually free group $G$, the concept of quasi-geodesic submonoid is independent from the finite generating set of $G$ considered. Therefore we can speak of  quasi-geodesic submonoids of $G$.

\vspace{0.2 cm}

\section{Being graded is decidable in virtually free groups}

We now show that it is possible to algorithmically decide whether a f.g. submonoid of a virtually free group is graded or not.

We will need the following two lemmas.

\begin{lemma}\label{graded_bound}
Let $G$ be a virtually free group 
generated by the finite set $A$.
	Let $M$ be a f.g. submonoid of $G$, given by its finite generating set $S$. 
	 Then there  exists a computable constant $C > 0$ such that the following conditions are equivalent:
	 \begin{itemize}
	 \item[(i)] $M$ is graded.
	 \item[(ii)] $|\xi_S(g)| < \infty$ for every $g \in G$ such that $d_A(1,g) \leq C$.
	 \end{itemize}
\end{lemma}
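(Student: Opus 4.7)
The plan is to set $C = 2K + L + 1$, where $K$ is a polyhyperbolicity constant for $(\overline{Cay}_A(G), d_A)$ and $L = \max\{d_A(1,s) : s \in S\}$. The constant $K$ is effectively computable for virtually free groups (via \cite{AS}), and $L$ is obviously computable from $S$ and $A$, so $C$ is computable.

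The direction (i) $\Rightarrow$ (ii) is immediate: if $M$ is graded, every element of $M$ has finitely many $S$-factorizations by definition, and for $g \in G \setminus M$ we trivially have $\xi_S(g) = \emptyset$.

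For (ii) $\Rightarrow$ (i), my key observation is that the proof of Theorem \ref{qg}, implication (i) $\Rightarrow$ (ii), uses the hypothesis ``$M$ graded'' only to ensure that the single quantity $L' := \zeta_{S,A}(2K + L + 1)$ is finite. Under hypothesis (ii) of the present lemma, the ball of radius $C = 2K + L + 1$ around $1$ in $G$ is finite, and each $g$ in it has $|\xi_S(g)| < \infty$; since $S$ is finite, this forces $\Xi_S(g) < \infty$, whence $L'$ is finite. The geodesic-polygon argument of Theorem \ref{qg} then goes through verbatim to show that $M$ is a quasi-geodesic submonoid of $G$, and the already-established implication (ii) $\Rightarrow$ (i) of Theorem \ref{qg} yields that $M$ is graded.

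The main obstacle is really conceptual rather than technical: one must recognize that the geometric argument in Theorem \ref{qg} is local, depending only on the maximum number of $S$-generators needed to express elements of $M$ lying in a ball of fixed radius around $1$. Once this is isolated, Lemma \ref{graded_bound} is essentially a corollary of Theorem \ref{qg}, with the only additional ingredient being the effectivity of the polyhyperbolicity constant $K$ for virtually free groups, which is standard.
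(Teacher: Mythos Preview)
Your proof is correct and uses the same constant $C = 2K + L + 1$ as the paper, but the route you take for (ii) $\Rightarrow$ (i) is genuinely different. The paper argues by contrapositive with a minimal counterexample: it picks $g \in M$ with $|\xi_S(g)| = \infty$ and $d_A(1,g)$ minimal, takes a midpoint $h$ on the geodesic $[g,1]$, and uses polyhyperbolicity to show that infinitely many of the $S$-factorizations of $g$ pass through a fixed finite disk around $h$; a pigeonhole argument then produces an element of $M$ with infinitely many factorizations and length roughly $\frac{1}{2}d_A(1,g) + K + \frac{L}{2}$, contradicting minimality unless $d_A(1,g) \leq C$.

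Your approach instead recycles Theorem~\ref{qg}: you observe (correctly) that the proof of (i) $\Rightarrow$ (ii) there invokes gradedness solely to guarantee that $L' = \zeta_{S,A}(2K+L+1)$ is finite, and that hypothesis (ii) of the present lemma already gives this. So the quasi-geodesic conclusion follows, and the easy direction of Theorem~\ref{qg} closes the loop. This is shorter and isolates the conceptual point nicely --- the geometric argument is local in the sense you describe --- but it asks the reader to reopen the proof of Theorem~\ref{qg} and audit exactly where gradedness enters. The paper's direct argument is self-contained at the cost of introducing the minimal-counterexample/midpoint device. Both are sound; yours is arguably the more economical once Theorem~\ref{qg} is in hand.
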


\begin{proof}
By \cite[Theorem 4.1]{AS}, $Cay_A(G)$ is polyhyperbolic and it follows from the proof that a polyhyperbolicity constant $K$  for $Cay_A(G)$ can be computed from an effective description of $G$ (via quasi-isometry constants for standard constructions). Let 
$$L = \max\{ d_A(1,s) \mid s \in S \}\; \mbox{ and }\; C = 2K+L+1.$$
Since $G$ has decidable word problem, both $L$ and $C$ are computable.

(i) $\Rightarrow$ (ii). Trivial.

(ii) $\Rightarrow$ (i).
Suppose that $M$ is not graded. Then there exists some $g \in G$ such that $|\xi_S(g)| = \infty$. We may assume that $d_A(1,g)$ is minimum for this property. 

Let $[g,1]$ be a geodesic in $Cay_A(G)$. Take $h \in [g,1]$ such that $d_A(1,h) = \lfloor \frac{d_A(1,g)}{2} \rfloor$. Let $D = \{ x \in G \mid d_A(x,h) \leq K + \frac{L}{2}\}$. Note that $A$ is finite, hence $D$ is finite as well. 

Let $g=z_{i1}z_{i2} \ldots z_{ij_i}$, $i \in \mathbb{N}$ denote infinitely many different decompositions of $g$, considered as equalities in the group $G$, where each $z_{ij} \in S$. Note that the set $\{ j_i, \: i \in \mathbb{N} \}$ is unbounded, since there are only finitely many different words of bounded length on $S$. We claim that
\begin{equation}
\label{poly1}
\mbox{for every $i \in \mathbb{N}$, $z_{i1}\ldots z_{ir_i} \in D$ for some $r_i \in \{ 0,\ldots,j_i\}$}.
\end{equation}
Indeed, fix a geodesic $[z_{i1}\ldots z_{ij}, z_{i1}\ldots z_{i,j+1}]$ for $j = 0,\ldots,j_i-1$, and consider the geodesic polygon 
$$[1, z_{i1}] \cup [z_{i1}, z_{i1}z_{i2}] \cup \ldots \cup [z_{i1}\ldots z_{ij_{i}-1}, z_{i1}\ldots z_{ij_i}] \cup [g, 1].$$
Since $K$ is a polyhyperbolicity constant for $Cay_A(G)$, there exist some $j \in \{ 0,\ldots,j_i-1 \}$ and $y \in [z_{i1}\ldots z_{ij}, z_{i1}\ldots z_{i,j+1}]$ such that $d_A(h,y) \leq K$. Now $d_A(z_{i1}\ldots z_{ij}, z_{i1}\ldots z_{i,j+1}) = d_A(1,z_{i,j+1}) \leq L$, so $d_A(y, z_{i1}\ldots z_{ij}) \leq \frac{L}{2}$ or $d_A(y, z_{i1}\ldots z_{i,j+1}) \leq \frac{L}{2}$. Thus $d_A(y,z_{i1}\ldots z_{ij}) \leq \frac{L}{2}$ for some $j \in \{ 0, \ldots, j_i\}$, yielding 
$$d_A(z_{i1}\ldots z_{ij},h) \leq d_A(z_{i1}\ldots z_{ij},y) + d_A(y,h) \leq \frac{L}{2} + K.$$
Thus (\ref{poly1}) holds.

For each $x \in D$, write
$$I_x = \{ i \in \mathbb{N} \mid z_{i1}\ldots z_{ir_i} = x\}.$$
Since $D$ is finite, it follows from (\ref{poly1}) that $I_x$ is infinite for some $x \in D$. 

Suppose first that
$R = \{ r_i \mid i \in X\}$ is infinite. Then $|\xi_S(x)| = \infty$. By minimality of $d_A(1,g)$, we get $d_A(1,x) \geq d_A(1,g)$. Hence
$$d_A(1,g) \leq d_A(1,x) \leq d_A(1,h) + d_A(h,x) \leq \frac{d_A(1,g)}{2} +K+\frac{L}{2}$$
and so $d_A(1,g) \leq 2K+L < C$, contradicting condition (i).

Thus $R$ is finite and so $|\xi_S(x^{-1}g)| = \infty$. By minimality of $d_A(1,g)$, we get $d_A(1,x^{-1}g) \geq d_A(1,g)$. Hence
$$\begin{array}{lll}
d_A(1,g)&\leq&d_A(1,x^{-1}g) = d_A(1,g^{-1}x) \leq d_A(1,g^{-1}h) + d_A(g^{-1}h,g^{-1}x)\\
&=&d_A(g,h) + d_A(h,x) \leq \frac{d_A(1,g)+1}{2} + K+\frac{L}{2}
\end{array}$$
and so $d_A(1,g) \leq 2K+L +1 = C$, contradicting condition (i) once again. Therefore $M$ is graded.
\end{proof}

The next lemma features a class of languages (containing rational languages as a particular case) known as {\em context-free languages}. They are usually defined through structures called {\em context-free grammars} or {\em pushdown automata}. For the basic theory, the reader is referred to \cite{Ber,HU}. We summarize next the properties of context-free languages which are relevant to us:

\begin{itemize}

\item[(CF1)]

If $L \subseteq A^*$ is context-free and $R \subseteq A^*$ is rational, then $L \cap R$ and 

$$R\diamond L = \{ u \in A^* \mid vu \in L \mbox{ for some }v \in R \}$$

are both context-free.

\item[(CF2)]

It is decidable whether or not a context-free language recognized by a given pushdown automaton is finite.

\end{itemize}

The celebrated Muller and Schupp Theorem illustrates the relevance of context-free languages for group theory: let $A$ be a finite alphabet, $G$ a group and $\varphi:(A \cup A^{-1})^* \to G$ a surjective monoid homomorphism satisfying $\varphi(a^{-1}) = (\varphi(a))^{-1}$ for every $a \in A$; then $G$ is virtually free if and only if $\varphi^{-1}(1)$ is context-free \cite{MS}.

We should remark that the closure properties (CF1) follow from effective algorithms: given a pushdown automaton recognizing $L$ and a finite automaton recognizing $R$, we can effectively construct pushdown automata recognizing $L \cap R$ and $R \diamond L$. Moreover, given a finite presentation for the virtually free group $G$ and $\varphi$, we can effectively construct a pushdown automaton recognizing $\varphi^{-1}(1)$.

\begin{lemma}\label{context-free}

Let $X$ be a finite alphabet and let $\alpha: X^* \rightarrow G$ be a homomorphism into a virtually free group $G$. Then for every $g \in G$ the language $\alpha^{-1}(g) \subseteq X^*$ is context-free, and given $\alpha$ and $g$ one can construct a pushdown automaton recognizing $\alpha^{-1}(g)$.

\end{lemma}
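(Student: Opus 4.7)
The idea is to reduce to the Muller--Schupp Theorem by enlarging the alphabet so that $\alpha$ becomes part of a homomorphism from a symmetric alphabet onto $G$, and then to extract $\alpha^{-1}(g)$ from the resulting word problem using only the closure properties listed in (CF1).

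First, I would pick a finite generating set $A$ of $G$ and form the alphabet $Y = X \sqcup X^{-1} \sqcup A \sqcup A^{-1}$, where $X^{-1}$ is a disjoint formal copy of $X$. Extend $\alpha$ to a surjective monoid homomorphism $\gamma : Y^* \to G$ by setting $\gamma(x) = \alpha(x)$, $\gamma(x^{-1}) = \alpha(x)^{-1}$ for $x \in X$, and letting $\gamma$ act as the natural projection on $A \cup A^{-1}$. Surjectivity is immediate because $A$ already generates $G$, and by construction $\gamma(y^{-1}) = \gamma(y)^{-1}$ on the base alphabet $X \cup A$. Therefore Muller--Schupp applies: since $G$ is virtually free, $\gamma^{-1}(1)$ is context-free, and by the effectiveness remark after Muller--Schupp stated in the paper, a pushdown automaton recognizing $\gamma^{-1}(1)$ can be constructed algorithmically from $\alpha$ together with a finite presentation of $G$.

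Next, represent $g$ by any word $u_g \in (A \cup A^{-1})^*$ and let $\overline{u_g} \in Y^*$ denote its formal inverse (reversed, each letter swapped with its formal inverse in $Y$), so that $\gamma(\overline{u_g}) = g^{-1}$. For $w \in Y^*$ one then has $w \in \gamma^{-1}(g)$ iff $\gamma(\overline{u_g}\,w) = 1$, i.e.\ iff $w \in \{\overline{u_g}\} \diamond \gamma^{-1}(1)$. Intersecting with $X^*$ to discard the letters of $Y \setminus X$ yields
$$\alpha^{-1}(g) \;=\; X^* \cap \bigl( \{\overline{u_g}\} \diamond \gamma^{-1}(1) \bigr).$$
Both $\{\overline{u_g}\}$ and $X^*$ are rational subsets of $Y^*$, so a $\diamond$-quotient followed by an intersection with a rational language, each effective by (CF1), exhibits $\alpha^{-1}(g)$ as context-free and produces the desired pushdown automaton.

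The only technical subtlety is arranging the hypotheses of Muller--Schupp, which demands an alphabet of the form $B \cup B^{-1}$ with $\varphi(b^{-1}) = \varphi(b)^{-1}$; this forces us to append formal inverses of the letters of $X$ and to adjoin a symmetric generating set of $G$ to ensure surjectivity. Once $\gamma$ has been set up so that Muller--Schupp is directly applicable, the remainder is a routine manipulation of context-free and rational languages via (CF1).
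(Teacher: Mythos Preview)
Your proof is correct and follows the same overall strategy as the paper: extend the alphabet so that Muller--Schupp applies, then recover $\alpha^{-1}(g)$ via the left quotient $\diamond$ and an intersection with $X^*$ using (CF1).

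The one genuine difference is in how you arrange the hypotheses of Muller--Schupp. The paper adjoins only formal inverses of $X$, so the extended map $\overline{\alpha}$ surjects onto the subgroup $H = \langle \alpha(X)\rangle \leq G$; this forces a case split on whether $g \in H$ (handled via the structure of $\mathrm{Rat}(G)$ for virtually free $G$) and, when $g \in H$, the computation of a word over $X \cup X^{-1}$ representing $g^{-1}$. You instead adjoin a full symmetric generating set $A \cup A^{-1}$ of $G$, making $\gamma$ surjective onto $G$ itself; this sidesteps both the case split and the appeal to the coset description of $\mathrm{Rat}(G)$, yielding a shorter and more elementary argument. The only cost is that you tacitly assume $G$ is finitely generated. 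The paper's route avoids this by working inside $H$, which is automatically f.g.; your route can be repaired in one line by replacing $G$ with the subgroup generated by $\alpha(X) \cup \{g\}$, which is f.g.\ and still virtually free.
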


\begin{proof}

Let $X^{-1}$ denote a set of {\em formal inverses} for the set $X$. We extend $\alpha$ to a monoid homomorphism $\overline{\alpha}:(X \cup X^{-1})^* \to G$ by setting $\overline{\alpha}(x^{-1}) = (\alpha(x))^{-1}$ for every $x \in X$. Then the image of $\overline{\alpha}$ is a subgroup $H$ of $G$. If $F$ is a finite index free subgroup of $G$, then $F \cap H$ is a finite index subgroup of $H$. Moreover, $F \cap H$ is free by Nielsen's Theorem, hence $H$ is 
virtually free. By Muller and Schupp Theorem, $\overline{\alpha}^{-1}(1)$ is an (effectively constructible) context-free language. 

It is well known how to describe the rational subsets of $G$. If $G = Fb_0 \cup \ldots \cup Fb_m$ is a decomposition of $G$ as a union of disjoint right cosets, then Rat$(G)$ consists of all subsets of the form
$$L_0b_0 \cup \ldots \cup L_mb_m \mbox{ for }L_0,\ldots,L_m \in {\rm Rat}(F)$$
\cite{Gru} (see also \cite[Prop. 4.1]{Silva2}). Moreover, the languages $L_i$ can be effectively computed from any rational expression or finite automaton describing the rational subset. Thus it is decidable, for a given $g \in G$, whether or not $g \in H$. We may therefore assume that $g \in H$ (otherwise $\alpha^{-1}(g)$ is empty, hence context-free), and in that case we can compute some $w \in (X \cup X^{-1}) ^*$ such that $\overline{\alpha}(w) = g^{-1}$. It is straightforward to check that $\overline{\alpha}^{-1}(g) = w \diamond \overline{\alpha}^{-1}(1)$, hence context-free (and effectively constructible) in view of (CF1). Now it follows as well from (CF1) that $\alpha^{-1}(u) = \overline{\alpha}^{-1}(u) \cap X^*$ is also an (effectively constructible) context-free language. 

\end{proof}

\begin{theorem}\label{graded_decidable}
	Given a f.g. submonoid $M$ of a virtually free group $G$, it is decidable whether $M$ is graded or not.
\end{theorem}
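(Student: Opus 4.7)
The plan is to combine Lemma \ref{graded_bound} with Lemma \ref{context-free} and the decidability of finiteness for context-free languages. Lemma \ref{graded_bound} already reduces gradedness to a finitary statement: $M$ is graded iff $|\xi_S(g)| < \infty$ for every element $g$ in the ball $B_C = \{ g \in G \mid d_A(1,g) \leq C \}$, where $C$ is the computable constant produced by that lemma. Since $A$ is finite, $B_C$ is finite, and because $G$ has decidable word problem (virtually free groups have decidable word problem via, e.g., the Muller--Schupp characterization), we can effectively enumerate a set of representatives for $B_C$ by listing all words of length at most $C$ over $A \cup A^{-1}$ and eliminating duplicates.

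The core step is then, for each fixed $g \in B_C$, to decide whether the set $\xi_S(g)$ is finite. Let $\alpha : S^* \to G$ be the canonical epimorphism onto $M$, regarded as a homomorphism into $G$ (with $S$ treated as a finite alphabet in the source). Then $\xi_S(g)$ is in bijection with the language $\alpha^{-1}(g) \subseteq S^*$: a word $s_1 \ldots s_n \in S^*$ lies in $\alpha^{-1}(g)$ precisely when $(s_1,\ldots,s_n) \in \xi_S(g)$. By Lemma \ref{context-free}, from $\alpha$ and $g$ we can effectively construct a pushdown automaton recognizing $\alpha^{-1}(g)$. Property (CF2) then allows us to decide whether this language is finite. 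Note that if $g \notin M$, then $\alpha^{-1}(g) = \emptyset$, which is trivially finite, so we do not need to separately test membership of $g$ in $M$.

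Putting everything together, the algorithm is: compute $C$; enumerate representatives of $B_C$; for each representative $g$, build the pushdown automaton for $\alpha^{-1}(g)$ and apply (CF2). Output ``graded'' if and only if every one of these tests returns ``finite''. Correctness is immediate from Lemma \ref{graded_bound}, and each individual step is effective.

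There is no genuine obstacle remaining: all the conceptual work has been discharged by the two preceding lemmas. The only thing one must check carefully is that the input of Lemma \ref{context-free} can be supplied algorithmically in our situation, namely that a finite presentation of $G$ together with the words in $A \cup A^{-1}$ representing the generators in $S$ and the element $g$ is enough to run the effective constructions (the pushdown automaton for $\overline{\alpha}^{-1}(1)$ via Muller--Schupp, and the rational transductions used to pass to $\alpha^{-1}(g)$). This is exactly what the ``effectively constructible'' clauses in Lemma \ref{context-free} and the remarks following (CF1) provide.
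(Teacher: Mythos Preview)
Your proof is correct and follows essentially the same approach as the paper's: reduce via Lemma~\ref{graded_bound} to checking finiteness of $\alpha^{-1}(g)$ for the finitely many $g$ in the ball of radius $C$, then invoke Lemma~\ref{context-free} and (CF2). The paper adds one small reduction you left implicit, namely that since $M$ is f.g.\ we may replace $G$ by the (virtually free) subgroup generated by $S$ and hence assume $G$ itself is finitely generated with generating set $A$; otherwise the argument is identical.
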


\begin{proof}
	Since a subgroup of a virtually free group is necessarily virtually free, we may assume that $G$ is generated by some finite set $A$. Suppose $M$ is given by a finite generating set $S$ and let $\alpha: S^* \rightarrow G$ be the canonical homomorphism induced by $s \mapsto s$ $(s \in S)$. 
According to Lemma \ref{graded_bound}, we can compute $C>0$ such that it suffices to check whether $\alpha^{-1}(g)$ is finite for all $g \in G$ such that $d_A(1,g) \leq C$, i.e., for finitely many elements. Let $w_1, \ldots, w_k$ be all elements of $G$ of length at most $C$. For each $i$, $i=1, \ldots, k$, 
	 $\alpha^{-1}(w_i)$ is context-free and given by its pushdown automaton by Lemma \ref{context-free}, so one can decide whether $\alpha^{-1}(w_i)$ is finite or not by (CF2). If for some $i=1, \ldots, k$ the language $\alpha^{-1}(w_i)$ is infinite then $M$ is not graded, otherwise it is.
\end{proof}

\section{Rational word problem}

In group theory, the expression {\rm word problem} is used with a double meaning: it may refer to the classical Dehn decidability problem, or to the set of words on a certain (finite) generating set representing the identity.

In the monoid case, we need a relation, since the congruence class of the identity does not determine at all a congruence on the free monoid. Thus let $M$ be a monoid generated by a finite set $S$, and let $\alpha:S^* \to M$ denote the canonical projection. The {\em word problem} of $M$ with respect to $S$ is defined as 
$$WP_S(M) = \{ (u,v) \in S^* \times S^* \mid \alpha(u) = \alpha(v) \}.$$
Clearly, the word problem (decidability problem) of $M$ is decidable if and only if $WP_S(M)$ is recursive (i.e. has decidable membership problem). We are particularly interested in the case of $WP_S(M)$ being rational.
By \cite[Theorem 4.2]{NPR}, $WP_S(M)$ is always rational when $M$ is finite. On the other hand, if $M$ is an infinite group, then $WP_S(M)$ is never rational \cite[Theorem 6.1]{NPR}. We will discuss the case of graded submonoids of virtually free groups.

We start by noting that the property of $WP_S(M)$ being rational is independent from the finite generating set $S$. 
In order to apply results from \cite{NPR}, we consider also semigroup word problems: if $S$ is a semigroup generating set for a semigroup $M$ and $\alpha:S^+ \to M$ denotes the canonical projection from the free semigroup $S^+ = S^* \setminus \{ 1 \}$ onto $M$, write
$$WP'_S(M) = \{ (u,v) \in S^+ \times S^+ \mid \alpha(u) = \alpha(v) \}.$$

Since $S^* \times S^*$ is generated by $\hat{S} = (S \times \{ 1 \}) \cup (\{ 1\} \times S)$, it is convenient to consider $\hat{S}$ as an alphabet. Then $\pi_1,\pi_2:\hat{S}^* \to S^*$ denote the projections on both components and $\pi:\hat{S}^* \to S^* \times S^*$ is the homomorphism defined by $\pi(u) = (\pi_1(u),\pi_2(u))$. It follows that Rat$(S^* \times S^*) = \{ \pi(L) \mid L \in {\rm Rat}(\hat{S}^*) \}$. A similar description holds for Rat$(S^+ \times S^+)$.

%
%
%
%

The following lemma also holds for generating sets containing 1, but since we do not care for those in the context of graded monoids, we just prove the simplest version.

\begin{lemma}
\label{gensets}
Let $S,S' \subseteq M \setminus \{ 1\}$ be finite generating sets for a monoid $M$. Then $WP_{S}(M)$ is rational if and only if $WP_{S'}(M)$ is rational.
\end{lemma}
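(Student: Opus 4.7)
The plan is to use symmetry: it suffices to show that if $WP_{S'}(M)$ is rational in $(S')^*\times (S')^*$, then $WP_S(M)$ is rational in $S^*\times S^*$. The bridge is a monoid homomorphism $\tau:S^*\to(S')^*$ built as follows: since $S'$ generates $M$, for each $s\in S$ choose a word $w_s\in(S')^*$ with $\alpha'(w_s)=s$, and extend freely by concatenation. Writing $\alpha:S^*\to M$ and $\alpha':(S')^*\to M$ for the canonical projections, we then have $\alpha=\alpha'\circ\tau$ by construction.

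The key identity is
\[
WP_S(M)=\{(u,v)\in S^*\times S^*:(\tau(u),\tau(v))\in WP_{S'}(M)\},
\]
which is immediate from $\alpha(u)=\alpha(v)\Longleftrightarrow\alpha'(\tau(u))=\alpha'(\tau(v))$. Reformulating rational subsets of a product $A^*\times B^*$ as rational transductions from $A^*$ to $B^*$ (via the $\pi(L)$ description recalled earlier, or by Nivat's theorem), this identity reads
\[
WP_S(M)=\Gamma_\tau\circ WP_{S'}(M)\circ\Gamma_\tau^{-1},
\]
where $\Gamma_\tau=\{(u,\tau(u)):u\in S^*\}\subseteq S^*\times(S')^*$ is the graph of $\tau$ and $\circ$ denotes composition of binary relations. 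One checks directly that a pair $(u,v)$ lies in the right-hand side precisely when $(\tau(u),\tau(v))\in WP_{S'}(M)$, which matches the previous display.

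To finish, I would invoke two standard facts from the Elgot--Mezei theory of rational transductions (see \cite[Ch.~III]{Ber}): first, $\Gamma_\tau$ is itself a rational transduction, since it is realised by the one-state finite transducer reading each $s\in S$ and outputting $w_s$; and second, the class of rational transductions is closed under inversion and composition. Combining these, the displayed composition is rational, and hence so is $WP_S(M)$. The only point that calls for care is the pedantic one of passing between the two formalisms of ``rational subset of $A^*\times B^*$'' (as an element of $\mathrm{Rat}(A^*\times B^*)$, and as a rational transduction $A^*\to B^*$); this equivalence is well known but should be flagged explicitly rather than swept under the rug. Beyond this bookkeeping, there is no real obstacle.
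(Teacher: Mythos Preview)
Your argument is correct. The identity $WP_S(M)=\Gamma_\tau\circ WP_{S'}(M)\circ\Gamma_\tau^{-1}$ holds exactly as you compute, $\Gamma_\tau$ is indeed the graph of a morphism and hence a rational transduction, and Elgot--Mezei closure under composition (together with trivial closure under inversion) finishes the job. The passage between ``rational subset of $A^*\times B^*$'' and ``rational transduction'' is handled in \cite[Ch.~III]{Ber}, so your citation is on target.

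Your route differs from the paper's. The paper does not argue directly: it passes to the subsemigroup $T$ generated by $S$ (which equals $M$ or $M\setminus\{1\}$ depending on whether $1$ is a product of nontrivial elements), invokes \cite[Theorem~6.2]{NPR} to reduce rationality of $WP_S(M)$ to rationality of the \emph{semigroup} word problem $WP'_S(T)$, and then cites \cite[Corollary~5.4]{NPR} for independence of the generating set in the semigroup setting. Your approach is more self-contained and more transparent, relying only on the classical transduction machinery rather than on the black-box results of \cite{NPR}; it also makes no use of the hypothesis $1\notin S\cup S'$, so it in fact proves the slightly stronger statement the paper alludes to just before the lemma. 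The paper's approach, by contrast, buys brevity at the cost of opacity, and needs the detour through semigroups precisely to be able to quote \cite{NPR}.
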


\begin{proof}
Let $T$ denote the subsemigroup of $M$ generated by $S$. Since $1 \notin S$, we have
$$T = \left\{
\begin{array}{ll}
M&\mbox{ if }1 \in (M \setminus \{ 1\})^+\\
M\setminus\{ 1\}&\mbox{ otherwise}
\end{array}
\right.$$
Hence $T$ is also the subsemigroup of $M$ generated by $S'$. 

By \cite[Theorem 6.2]{NPR}, $WP_{S}(M)$ is rational if and only if $WP'_{S}(T)$ is rational. Similarly, $WP_{S'}(M)$ is rational if and only if $WP'_{S'}(T)$ is rational. Now the desired equivalence follows from \cite[Corollary 5.4]{NPR}, which establishes the analogous equivalence for semigroups.
\end{proof}

Let $S$ be a finite subset of a group $G$, generating a submonoid $M$.  Let $\alpha_S:S^* \to M$ be the canonical homomorphism.
We define an automaton $\Gamma_S$ over 
$\hat{S} = (S \times \{ 1 \}) \cup (\{ 1\} \times S)$ as follows:
\begin{itemize}
\item
the vertex set is the subset $M^{-1}M=\{x^{-1}y | \: x, y \in M \} \subseteq G$;
\item
$1$ is the only initial and terminal state;
\item
for every vertex $w \in M^{-1}M$ and $x \in \hat{S}$, there exists an edge 
$w \mapright{x} (\alpha_S\pi_1(x))^{-1}w(\alpha_S\pi_2(x))$.
\end{itemize}
The next lemma shows that $\Gamma_S$ recognizes somehow the word problem of $M$ (i.e. the set of all relations in $M$). 

\begin{lemma}\label{language}
	Let $M$ be a submonoid of a group $G$, generated by a finite subset $S$. Then $WP_S(M) = \pi(L(\Gamma_S))$.
\end{lemma}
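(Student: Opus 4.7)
The plan is to prove both inclusions by tracking the state reached after reading a word from the initial vertex $1$.

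First I would establish the following invariant by induction on $|z|$: for every $z \in \hat{S}^*$, there is a (unique) path in $\Gamma_S$ starting at $1$ and labelled by $z$, and the state reached at the end of this path is
$$q(z) = \bigl(\alpha_S(\pi_1(z))\bigr)^{-1}\,\alpha_S(\pi_2(z)).$$
The base case $z = 1$ gives $q(1) = 1$. For the inductive step $z = z'x$ with $x \in \hat{S}$, one reaches $q(z') \in M^{-1}M$ by induction, and by definition of the edges the next state is $(\alpha_S\pi_1(x))^{-1} q(z') \alpha_S\pi_2(x)$, which equals $q(z)$ since $\pi_1, \pi_2, \alpha_S$ are all monoid homomorphisms. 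Note that $q(z)$ is indeed always in the vertex set $M^{-1}M$, since $\alpha_S(\pi_1(z)), \alpha_S(\pi_2(z)) \in M$, so the required edges genuinely exist in $\Gamma_S$.

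For the inclusion $\pi(L(\Gamma_S)) \subseteq WP_S(M)$: if $z \in L(\Gamma_S)$, then $q(z) = 1$ because $1$ is the only terminal state. This means $\alpha_S(\pi_1(z)) = \alpha_S(\pi_2(z))$, so $\pi(z) = (\pi_1(z), \pi_2(z)) \in WP_S(M)$.

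For the reverse inclusion $WP_S(M) \subseteq \pi(L(\Gamma_S))$: given $(u,v) \in WP_S(M)$ with $u = s_1 \cdots s_n$ and $v = t_1 \cdots t_m$ in $S^*$, take
$$z = (s_1,1)(s_2,1)\cdots(s_n,1)(1,t_1)(1,t_2)\cdots(1,t_m) \in \hat{S}^*.$$
Then $\pi_1(z) = u$ and $\pi_2(z) = v$, so $\pi(z) = (u,v)$, and by the invariant the path from $1$ labelled by $z$ terminates at $q(z) = \alpha_S(u)^{-1}\alpha_S(v) = 1$, since $\alpha_S(u) = \alpha_S(v)$. Hence $z \in L(\Gamma_S)$ and $(u,v) \in \pi(L(\Gamma_S))$.

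There is no real obstacle here: the definition of $\Gamma_S$ is engineered precisely so that the state after reading $z$ records the difference $\alpha_S(\pi_1(z))^{-1}\alpha_S(\pi_2(z))$. The only point deserving attention is to check that all intermediate vertices along any path starting at $1$ lie in the vertex set $M^{-1}M$ (so the transitions are defined), which follows immediately from the form of $q(z)$.
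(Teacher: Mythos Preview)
Your proof is correct and follows essentially the same approach as the paper: both compute that the unique path from $1$ labelled by $z \in \hat{S}^*$ ends at $(\alpha_S\pi_1(z))^{-1}\alpha_S\pi_2(z)$, whence $z \in L(\Gamma_S)$ if and only if $\pi(z) \in WP_S(M)$. The paper concludes by simply invoking surjectivity of $\pi$, whereas you construct an explicit preimage; your added remarks (the induction and the check that intermediate states lie in $M^{-1}M$) are just spelled-out details of the same argument.
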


\begin{proof}

Let $w = x_1\ldots x_n \in \hat{S}^*$, with $x_1,\ldots,x_n \in \hat{S}$. It follows from the definition of $\Gamma_S$ (which is deterministic and complete) that $w$ labels a unique path out of the basepoint 1, precisely
$$1 \mapright{w} (\alpha_S\pi_1(x_n))^{-1} \ldots (\alpha_S\pi_1(x_1))^{-1}(\alpha_S\pi_2(x_1))\ldots (\alpha_S\pi_2(x_n)) = (\alpha_S\pi_1(w))^{-1}(\alpha_S\pi_2(w)).$$
Hence 
$$w \in L(\Gamma_S)\; \mbox{ if and only if }\; \alpha_S\pi_1(w) = \alpha_S\pi_2(w) \; \mbox{ if and only if }\;  \pi(w) \in WP_S(M).$$
Since $\pi$ is surjective, the lemma follows.

\end{proof}

Assume now that $G$ is a virtually free group generated by a finite set $A$, and let $S$ be a finite subset of $G \setminus \{ 1\}$ generating a graded submonoid $M$.
Given $C > 0$, let $\Gamma_S^C$ denote the subautomaton of $\Gamma_S$ induced by the subset of vertices
$\{ g \in M^{-1}M \mid d_A(1,g) \leq C\}$. Since $A$ is finite, $\Gamma_S^C$ is a finite subautomaton of $\Gamma_S$. Since $G$ has decidable word problem, we can compute the finite set $\{ g \in G \mid d_A(1,g) \leq C\}$. Since $M^{-1}M$ is a rational subset of $G$, it has decidable membership problem, hence we can compute the vertex set of $\Gamma_S^C$ and therefore $\Gamma_S^C$ itself.

Our strategy is to show that we can compute $C$ such that $\Gamma_S^C$ may replace $\Gamma_S$ in Lemma \ref{language}. 

\begin{lemma}\label{language_finite}
Let $G$ be a virtually free group and let $M$ be a graded submonoid of $G$ generated by a finite set $S \subseteq G \setminus \{ 1 \}$. Then we can compute some $C > 0$ such that $WP_S(M) = \pi(L(\Gamma_S^C))$.
\end{lemma}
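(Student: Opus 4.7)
The strategy is to combine the quasi-geodesic characterization of Theorem~\ref{qg} with stability of quasi-geodesics in hyperbolic spaces (the Morse lemma). The inclusion $\pi(L(\Gamma_S^C)) \subseteq WP_S(M)$ is automatic from Lemma~\ref{language} for any $C$ since $\Gamma_S^C$ is a subautomaton of $\Gamma_S$, so the real content is the reverse inclusion. Unravelling the definition of $\Gamma_S$, any word $w \in \hat{S}^*$ with $\pi(w) = (u, v)$ (for $u = u_1 \ldots u_m$, $v = v_1 \ldots v_n$) is nothing but an interleaving (a monotone lattice path in $[0,m] \times [0,n]$ from $(0,0)$ to $(m,n)$) of the letters of $u$ and $v$, and at the intermediate grid-state $(i, j)$ the current vertex is $(u_1\ldots u_i)^{-1}(v_1 \ldots v_j)$, whose $d_A$-distance to $1$ equals $d_A(\varphi_u(i), \varphi_v(j))$. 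So the task reduces to the following: for every $(u,v) \in WP_S(M)$, exhibit a monotone lattice path from $(0,0)$ to $(m,n)$ along which $d_A(\varphi_u(i), \varphi_v(j)) \leq C$.

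Theorem~\ref{qg} supplies computable constants $\lambda, \varepsilon$ such that every $\varphi \in Cont(S,G,A)$ is a $(\lambda, \varepsilon)$-quasi-geodesic of $\overline{Cay}_A(G)$. Since $\overline{Cay}_A(G)$ is polyhyperbolic (hence hyperbolic) with a computable polyhyperbolicity constant $K$ (as in the proof of Lemma~\ref{graded_bound}), an effective form of the Morse lemma yields a computable $D = D(K, \lambda, \varepsilon)$ with the property that any $(\lambda, \varepsilon)$-quasi-geodesic from $1$ to $g = \alpha_S(u) = \alpha_S(v)$ lies in the $D$-neighbourhood of any fixed geodesic $\gamma:[0, L_g] \to \overline{Cay}_A(G)$ from $1$ to $g$, and, either by arc-length reparametrization or by taking monotone envelopes of closest-point projections, admits a non-decreasing $\gamma$-parameter $\sigma_u(i), \sigma_v(j) \in [0, L_g]$ with $\sigma_u(0) = \sigma_v(0) = 0$, $\sigma_u(m) = \sigma_v(n) = L_g$, step size bounded by $2D + L$ (where $L = \max_{s\in S} d_A(1,s)$), and $d_A(\varphi_u(i), \gamma(\sigma_u(i))) \leq D$, and similarly for $v$.

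With such monotone parameters in hand, the greedy rule ``at state $(i,j)$, advance whichever of $i,j$ has the smaller $\sigma$-value (and the unique admissible one if $i=m$ or $j=n$)'' preserves the invariant $|\sigma_u(i) - \sigma_v(j)| \leq 2D + L$ by a short induction on the number of steps, using the bounded step size together with the boundary equalities $\sigma_u(m) = \sigma_v(n) = L_g$. Plugging this into the triangle inequality along $\gamma$ gives
$$d_A(\varphi_u(i), \varphi_v(j)) \leq 2D + |\sigma_u(i) - \sigma_v(j)| \leq 4D + L,$$
so $C := 4D + L$ works.

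For computability: $L$ is immediate, $K$ is computable as in Lemma~\ref{graded_bound}, and the constants $\lambda, \varepsilon$ of Theorem~\ref{qg} are expressed in terms of $L, K$ and $L' = \zeta_{S,A}(2K + L + 1)$, which is itself computable because for each of the finitely many $u \in G$ with $d_A(1,u) \leq 2K + L + 1$ the language $\alpha_S^{-1}(u)$ is context-free by Lemma~\ref{context-free} and is finite since $M$ is graded, so (CF2) together with effective enumeration of a finite context-free language yields $\Xi_S(u)$. The Morse constant $D$ is computable from $K, \lambda, \varepsilon$ by any effective form of the Morse lemma. The main technical obstacle will be citing (or proving) a sufficiently strong effective Morse statement that gives the monotone $\gamma$-parametrization with bounded step, since the bare Hausdorff bound from the basic Morse lemma is not quite enough: the naive closest-point projection $\sigma_u$ need not be pointwise monotone, and one must either pass to its monotone envelope (controlling the error by the coarse-monotonicity of projections in hyperbolic spaces) or arc-length reparametrize $\varphi_u$ first.
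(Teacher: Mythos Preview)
Your outline is sound and the greedy lattice-path argument is correct once you have the monotone $\gamma$-parameters with bounded step; your identification of the monotone-projection issue as the main technical point is accurate, and it can indeed be resolved by the coarse monotonicity of closest-point projection to a geodesic in a $\delta$-hyperbolic space (the backtracking $\max_{k\le i}\tau_u(k)-\tau_u(i)$ is bounded by an explicit function of $K,\lambda,\varepsilon$, so the monotone envelope works at the cost of replacing $D$ by a slightly larger computable $D'$, and your final $C$ becomes $4D'+L$).

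The paper takes a genuinely different route after the Morse step. It uses only the bare Hausdorff bound $R$ from the Morse lemma: from fellow-travelling of $\varphi_v$ and $\varphi_{v'}$ one gets, for each $i$, some $j_i$ with $d_A(w_i,w'_{j_i})\le R'$, with no monotonicity. Instead of repairing monotonicity via hyperbolic geometry, the paper extracts a maximal nondecreasing subsequence $(j_{r_0},\ldots,j_{r_s})$ and then invokes the graded hypothesis a \emph{second} time, through $\zeta_{S,A}$, to bound both $j_{r_i}-j_{r_{i-1}}$ and $r_i-r_{i-1}$; the path in $\Gamma_S^C$ is then built blockwise. So the paper trades your refined Morse/projection statement for an elementary combinatorial step that re-uses gradedness. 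What each approach buys: the paper's argument needs nothing beyond the standard Hausdorff form of the Morse lemma and is entirely self-contained, while your argument isolates a purely geometric fact (quasi-geodesic $\Rightarrow$ bounded-width interleaving) that would go through verbatim for quasi-geodesic submonoids of any hyperbolic group, with gradedness entering only to supply computable $(\lambda,\varepsilon)$ via Theorem~\ref{qg}.
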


\begin{proof}
Since $M$ is f.g., we may assume that $G$ is f.g. Fix a finite generating set $A$ for $G$. We have already remarked that, given an explicit description of $G$ (say, a finite presentation), we can compute a polyhyperbolicity constant $K$ for $G = \langle A\rangle$. Since $G$ has decidable word problem, we can compute the constant $L$ from Theorem \ref{qg}. On the other hand, it follows from Lemma \ref{context-free} that $\Xi_S(u)$ is computable for every $u \in M$, and the same applies to the mapping $\zeta_{S,A}$. Hence the constant $L'$ from Therorem \ref{qg} is also computable, and so are $\lambda$ and $\varepsilon$. 

By \cite[Theorem H.1.7]{BH}, it is possible to compute a constant $R$ such that, for every $(\lambda,\varepsilon)$-quasi-geodesic $\varphi:[a,b] \to \overline{Cay}_A(G)$ and every geodesic $[\varphi(a),\varphi(b)]$ in $\overline{Cay}_A(G)$, the Hausdorff distance between their images is bounded by $R$ (that is, each point in one of them is at distance $\leq R$ from some point on the other). 

Let $(v,v') \in \pi(L(\Gamma_S)) \subseteq S^* \times S^*$. By Lemma \ref{language}, we have $\alpha_S(v) = \alpha_S(v')$. Write 
$v = v_1\ldots v_m$ and $v' = v'_1,\ldots, v'_n$ with $v_i,v'_j \in S$. Write $w_i = v_1\ldots v_i$ and $w'_i = v'_1\ldots v'_i$. By Theorem \ref{qg}, $\varphi_v$ and $\varphi_{v'}$ are both $(\lambda,\varepsilon)$-quasi-geodesics, hence their images lie at Hausdorff distance $\leq 2R$ from each other. Since each point in the image lies at distance $\leq \frac{L}{2}$ from a vertex, it follows that
\begin{equation}
\label{qua1}
\mbox{for each $i \in \{ 0,\ldots,m\}$, there exists some $j_i \in \{ 0,\ldots,n\}$ such that $d_A(w_i,w'_{j_i}) \leq R'$,}
\end{equation} 
where $R' = 2R + \frac{L}{2}$. Moreover, we may assume that $j_0 = 0$ and $j_m = n$ (since $\alpha_S(v) = \alpha_S(v')$, the quasi-geodesics $\varphi_v$ and $\varphi_{v'}$ are coterminal).

Consider now a maximal nondecreasing subsequence of $(j_0,\ldots,j_m)$, say $(j_{r_0},\ldots,j_{r_s})$. Suppose that $j_{r_i} - j_{r_{i-1}} > \zeta_{S,A}(2R'+L)$. Let
$$I = \{ h \in \{ r_{i-1} + 1,\ldots, r_i -1\} \mid j_{h} <  j_{r_{i-1}} \},\quad J = \{ h \in \{ r_{i-1} + 1,\ldots, r_i -1\} \mid j_{h} >  j_{r_{i}} \}.$$
By maximality of $(j_{r_0},\ldots,j_{r_s})$, we have $I \cup J = \{ r_{i-1} + 1,\ldots, r_i -1\}$.

Suppose that $r_i-1 \in I$. 
Let 
$x = (w'_{j_{r_i-1}})^{-1}w'_{j_{r_i}}$ and consider the sequence of vertices $w'_{j_{r_i-1}},w_{r_i-1},w_{r_i},w'_{j_{r_i}}$. 
Then 
$$d_A(1,x) = d_A(w'_{j_{r_i-1}},w'_{j_{r_i}}) \leq d_A(w'_{j_{r_i-1}},w_{r_i-1}) + d_A(w_{r_i-1},w_{r_i}) + d_A(w_{r_i},w'_{j_{r_i}}) \leq R' + L + R',$$
hence $\Xi(x) \leq \zeta_{S,A}(2R'+L)$. But $x = v'_{j_{r_i-1}+1}\ldots v'_{j_{r_i}} \in S^{j_{r_i} - j_{r_i-1}}$. Since $j_{r_i} - j_{r_i-1} > j_{r_i} - j_{r_{i-1}} > \zeta_{S,A}(2R'+L)$, we reach a contradiction. Thus $r_i-1 \in J$. Similarly, $r_{i-1}+1 \in I$.

Thus we may assume that there exists some $h \in I$ such that $h+1 \in J$. Taking 
$x = (w'_{j_{h}})^{-1}w'_{j_{h+1}}$  and the sequence of vertices $w'_{j_{h}}, w_{h}, w_{h+1}, w'_{j_{h+1}},$
we get a contradiction in a similar way. Therefore 
\begin{equation}
\label{qua2}
j_{r_i} - j_{r_{i-1}} \leq \zeta_{S,A}(2R'+L) \mbox{ for }i = 1,\ldots,m.
\end{equation}

Taking $y = w_{r_{i-1}}^{-1}w_{r_i}$, it follows from 
(\ref{qua2}) that
$$\begin{array}{lll}
d_A(1,y)&=&d_A(w_{r_{i-1}},w_{r_i}) 
 \leq d_A(w_{r_{i-1}},w'_{j_{r_{i-1}}}) + d_A(w'_{j_{r_{i-1}}},w'_{j_{r_{i}}}) + d_A(w'_{j_{r_{i}}},w_{r_i})\\
 &\leq&R' + L\zeta_{S,A}(2R'+L) + R',
 \end{array}$$
hence $\Xi(y) \leq \zeta_{S,A}(L\zeta_{S,A}(2R'+L) +2R')$. Since $y = v_{r_{i-1}+1}\ldots v_{r_i} \in S^{r_i - r_{i-1}}$, we get
\begin{equation}
\label{qua3}
r_i - r_{i-1} \leq \zeta_{S,A}(L\zeta_{S,A}(2R'+L) +2R') \mbox{ for }i = 1,\ldots,m.
\end{equation}

Let 
$$C = R' + L\zeta_{S,A}(L\zeta_{S,A}(2R'+L) +2R') + L\zeta_{S,A}(2R'+L).$$

For $i = 0,\ldots,m$, write $t_i = w_{r_i}^{-1}w'_{j_{r_i}}$. It follows that $d_A(1,t_i) = d_A(w_{r_i},w'_{j_{r_i}}) \leq R' \leq C$. Moreover,
$$t_i = v_{r_i}^{-1}\ldots v_{r_{i-1}+1}^{-1}t_{i-1}v'_{j_{r_{i-1}}+1}\ldots v'_{j_{r_i}}.$$
Write 
$$z_{k,\ell} =  v_{k}^{-1}\ldots v_{r_{i-1}+1}^{-1}t_{i-1}v'_{j_{r_{i-1}}+1}\ldots v'_{\ell} \mbox{ for $k = r_{i-1}, \ldots, r_i$ and $\ell = j_{r_{i-1}},\ldots,j_{r_i}$}.$$
We have a path
\begin{equation}
\label{qua4}
t_{i-1} = z_{r_{i-1},j_{r_{i-1}}} \vvvvlongmapright{(v_{r_{i-1}+1}^{-1},1)} \ldots \vvlongmapright{(v_{r_{i}}^{-1},1)} z_{r_{i},j_{r_{i-1}}} \vvvvlongmapright{(1, v'_{j_{r_{i-1}}+1})} \ldots 
\vvlongmapright{(1, v'_{j_{r_{i}}})} z_{r_{i},j_{r_{i}}} = t_i
\end{equation}
in $\Gamma_S$. But 
$$d_A(1,z_{k,\ell}) \leq L(r_i-r_{i-1}) + R' + L(j_{r_i}-j_{r_{i-1}}) \leq C$$
for all $k,\ell$ in view of (\ref{qua2}) and (\ref{qua3}). Thus (\ref{qua4}) is a path in $\Gamma_S^C$. Thus, for $i = 1,\ldots,m$, we have a path $t_{i-1} \mapright{q_i} t_i$ in $\Gamma_S^C$ such that $\pi(q_i) = (v_{r_{i-1}+1},\ldots v_{r_{i}}, v'_{j_{r_{i-1}}+1} \ldots v'_{j_{r_{i}}})$. Gluing these paths in the obvious way, we get a successful path $1 = t_0 \mapright{q} t_m = 1$ in $\Gamma_S^C$ with $\pi(q) = (v_1\ldots v_m, v'_1\ldots v'_n) = (v,v')$.

Therefore $\pi(L(\Gamma_S)) \subseteq \pi(L(\Gamma_S^C))$. Since the opposite inclusion holds trivially, we only have to use Lemma \ref{language}.
\end{proof}

Thus for a graded submonoid $M$ of a virtually free group $G$ we can compute a finite automaton $\Gamma_S^C$ which encodes all the relations in $M$. We call it a {\it relation automaton} for $M$. Note that we can always suppose $\Gamma_S^C$ is trim if necessary.

\begin{theorem}\label{graded_rational}
Every graded submonoid of a virtually free group has rational word problem.
\end{theorem}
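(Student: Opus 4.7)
The plan is to simply assemble the machinery that has already been built. The statement follows almost immediately from Lemma \ref{language_finite} together with the description of $\mathrm{Rat}(S^* \times S^*)$ via the projection $\pi$.

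First, I would reduce to a convenient generating set. By Lemma \ref{gensets}, rationality of $WP_S(M)$ does not depend on the choice of finite generating set $S \subseteq M \setminus \{1\}$, and since $M$ is graded we may freely assume such an $S$ exists (indeed $M$ is finitely generated, and removing $1$ if present does not change the fact that $S$ generates $M$). So we fix such an $S$.

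Next, since $M$ is a graded submonoid of the virtually free group $G$, Lemma \ref{language_finite} applies and yields a computable constant $C > 0$ such that
\[
WP_S(M) = \pi(L(\Gamma_S^C)).
\]
The automaton $\Gamma_S^C$ is a \emph{finite} subautomaton of $\Gamma_S$ (its vertex set is $\{g \in M^{-1}M \mid d_A(1,g) \le C\}$, which is finite because $A$ is finite). By Kleene's theorem, $L(\Gamma_S^C)$ is therefore a rational language over the finite alphabet $\hat{S}$.

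Finally, I would invoke the description $\mathrm{Rat}(S^* \times S^*) = \{\pi(L) \mid L \in \mathrm{Rat}(\hat{S}^*)\}$ recalled right before Lemma \ref{gensets} (or equivalently, the fact that the homomorphic image of a rational set is rational). Applying this to $L = L(\Gamma_S^C)$ gives $\pi(L(\Gamma_S^C)) \in \mathrm{Rat}(S^* \times S^*)$, and hence $WP_S(M)$ is rational. No real obstacle arises here; all the hard work has been done in establishing the quasi-geodesic characterization (Theorem \ref{qg}) and using it to bound the relevant portion of the Cayley graph in Lemma \ref{language_finite}. The present theorem is essentially a packaging statement extracting the rationality consequence, and it is worth noting that the argument is effective: given $S$, $G$ and $A$, one can actually construct the finite automaton $\Gamma_S^C$ recognizing a rational language that projects onto $WP_S(M)$.
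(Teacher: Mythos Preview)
Your proposal is correct and follows essentially the same approach as the paper, which simply cites Lemma~\ref{language_finite}; you have just unpacked the implicit reasoning (finiteness of $\Gamma_S^C$, Kleene's theorem, and preservation of rationality under $\pi$) that makes the citation work. The preliminary reduction via Lemma~\ref{gensets} is harmless but unnecessary, since Lemma~\ref{language_finite} already allows any finite $S \subseteq G \setminus \{1\}$.
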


\begin{proof}
By Lemma \ref{language_finite}.
\end{proof}

Note that the converse of Theorem \ref{graded_rational} does not hold. If $M$ is a finite nontrivial monoid, then it has rational word problem by \cite[Theorem 4.2]{NPR}. However, $M$ is not graded since an epimorphism $\varphi:S^* \to M$ must necessarily have an infinite pre-image $\varphi^{-1}(x)$ for some $x \in M$.

We note also that Theorem \ref{graded_rational} does not hold for arbitrary graded monoids. Indeed, it is easy to check that the monoid $\mathbb{N} \times \mathbb{N}$ is graded (see the proof of Proposition \ref{raag}). However, in view of Lemma 4.5 and Theorem 6.2 from \cite{NPR}, $\mathbb{N} \times \mathbb{N}$ has not a rational word problem.

\section{The isomorphism problem}

Before solving the isomorphism problem for graded submonoids of virtually free groups, we need one additional lemma about rational subsets of a direct product $G \times G$.
Note that, even if $G$ is a free group, arbitrary rational subsets of $G \times G$, in fact even finitely generated subgroups, can behave very badly from the algorithmic point of view, in particular, they can have unsolvable membership problem, see \cite{Mikh}. However, one can decide whether a rational subset lies in the diagonal subgroup for a large class of groups, as the following lemma shows.
\par

\begin{lemma}\label{rational_subsets}
	Let $G$ be a f.g. group with decidable word problem, $R$ be a rational subset of $G \times G$, and $D=\{(x,x), \: x \in G \}$ be the diagonal subgroup of $G \times G$. Then it is decidable whether or not $R \subseteq D$.
\end{lemma}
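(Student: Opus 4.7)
The plan is to reduce the containment $R\subseteq D$ to finitely many instances of the word problem in $G$, via a spanning-tree argument on a finite automaton presenting $R$, in the spirit of the drift-tracking automaton $\Gamma_S$ used in Section~6. Fix a finite symmetric generating set $S$ of $G$ with canonical surjection $\alpha:S^*\to G$, and write $R=\pi(L(\mathcal{A}))$ for a trim finite automaton $\mathcal{A}$ over $\hat S=(S\times\{1\})\cup(\{1\}\times S)$, where $\pi:\hat S^*\to G\times G$ is the induced monoid homomorphism. For each $w\in\hat S^*$ set $\delta(w):=\alpha\pi_1(w)^{-1}\alpha\pi_2(w)\in G$; then $\pi(w)\in D$ iff $\delta(w)=1$, and a direct computation gives the update formula $\delta(wx)=\alpha(a)^{-1}\delta(w)\alpha(b)$ whenever $x=(a,b)\in\hat S$.

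Pick a spanning tree of $\mathcal{A}$ rooted at the initial state $q_0$, and for each state $q$ let $u_q\in\hat S^*$ denote the label of the tree path $q_0\to q$; set $g_q:=\delta(u_q)$, represented effectively as a word in $S$. The claim is that $R\subseteq D$ if and only if (A) $g_{q'}=\alpha(a)^{-1}g_q\alpha(b)$ in $G$ for every non-tree transition $q\mapright{(a,b)}q'$ of $\mathcal{A}$, and (B) $g_{q_f}=1$ in $G$ for every terminal state $q_f$. Since $\mathcal{A}$ has finitely many transitions and terminal states, verifying (A) and (B) amounts to a finite batch of word-problem instances in $G$, which is decidable by hypothesis.

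Sufficiency of (A) and (B) follows by induction on path length: any path $q_0\mapright{w}q$ satisfies $\delta(w)=g_q$, automatically along tree edges (by construction of $g_{q'}$) and by (A) along non-tree edges, using the update formula at each step; hence each accepting path yields $\delta(w)=g_{q_f}=1$ by (B). For necessity, suppose $R\subseteq D$; given any two paths $q_0\mapright{w_1}q$ and $q_0\mapright{w_2}q$, trimness supplies a common continuation $v:q\to q_f\in F$, so $w_1v,w_2v\in L(\mathcal{A})$ and $\delta(w_iv)=1$ for $i=1,2$. The update formula rewrites this as $\alpha\pi_1(v)^{-1}\delta(w_i)\alpha\pi_2(v)=1$, and because the map $X\mapsto\alpha\pi_1(v)^{-1}X\alpha\pi_2(v)$ is a bijection of $G$---the one place where it is essential that $G$ be a group rather than a monoid---we conclude $\delta(w_1)=\delta(w_2)$. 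Applying this uniqueness to the pair $(u_q(a,b),u_{q'})$ forces (A), and to accepting tree paths forces (B). This is the entire strategy; no geometric or hyperbolicity input is required, only decidability of the word problem, and the main (modest) subtlety is the bijectivity remark above.
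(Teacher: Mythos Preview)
Your proof is correct and follows essentially the same approach as the paper: pick a spanning tree of a trim finite automaton presenting $R$, assign to each state $q$ the drift $g_q=\delta(u_q)$ of its tree path, and reduce $R\subseteq D$ to checking edge-consistency plus vanishing at terminal states. The paper's $\lambda(q)$ is your $g_q$, and the paper's edge condition $\lambda(q)=(\beta_1(x))^{-1}\lambda(p)\beta_2(x)$ is your (A); the only cosmetic difference is that the paper states the edge condition for \emph{all} transitions while you observe it suffices to check non-tree ones, and your necessity argument is phrased via the ``two paths to the same state have equal drift'' principle rather than exhibiting an explicit counterexample word---both amount to the same cancellation step using that $G$ is a group.
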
 

\begin{proof}

Fix a finite alphabet $A$ and a surjective homomorphism $\alpha:A^* \to G$. Then $\alpha$ induces a surjective homomorphism $\beta:\hat{A}^* \to G \times G$ defined by $\beta(a,b) = (\alpha(a),\alpha(b))$ for $(a,b) \in \hat{A}=(A \times \{ 1 \}) \cup (\{ 1\} \times A)$. We also write $\beta(a,b) = (\beta_1(a,b),\beta_2(a,b))$. 
 
Consider a finite automaton  $\mathcal{A} = (Q,\hat{A},I,T,E)$ such that $\beta(L(\mathcal{A})) = R$. We may assume that $\mathcal{A}$ is trim and has a unique initial state $q_0$.

Decidability may be proved using the construction of a product of an automaton by an action, as introduced in \cite[Definition 3.6, p.267]{Sak}. Namely, considering an action $\omega$ of $G \times G$ on $G$ given by $\omega(f,(g,h))=g^{-1}fh$ for all $f,g,h \in G$, one can see that $R$ is contained in $D$ if and only if the product $\mathcal{A} \times \omega$ is in bijection with $\mathcal{A}$, and the value of every terminal state in $\mathcal{A} \times \omega$ is $1$, where the value is defined on p.268 in \cite{Sak}, and these conditions are decidable.
	\par 

	But it is simpler for the reader to follow a direct proof, which avoids technical details and follows in fact the same ideas as the proof outlined above. Let $P$ be a spanning tree for $\mathcal{A}$, i.e., a subautomaton admitting, for every $q \in Q$, a unique path from $q_0$ to $q$. Note that such a spanning tree may be constructed fixing a total order on $A$ and taking, for every $q \in Q$, the path $q_0 \mapright{u_q} q$ such that $u_q$ is minimum for the shortlex order (and making $P$ the union of the edges occurring in these paths). 
	
	We define a mapping $\lambda:Q \to G$ as follows. Let $q \in Q$ and let $q_0 \mapright{u_q} q$ be the unique path in $P$ connecting $q_0$ to $q$. Then let $\lambda(q) = (\beta_1(u_q))^{-1}\beta_2(u_q)$.  
	We claim that
	\begin{equation}
	\label{spa1}
	\mbox{$R \subseteq D$ if and only if } \left\{
	\begin{array}{l}
	\lambda(t) = 1 \mbox{ for every } t \in T \\
	\lambda(q) = (\beta_1(x))^{-1}\lambda(p)\beta_2(x)\mbox{ for every }(p,x,q) \in E
	\end{array}
	\right.
	\end{equation}
	
	Indeed, suppose that $\lambda(t) \neq 1$ for some $t \in T$. Then $\beta_1(u_t) \neq \beta_2(u_t)$ and so $\beta(u_t) \notin D$. Since $u_t \in L(\mathcal{A})$, it follows that $R \not\subseteq D$.
	
	Suppose now that there exists some $(p,x,q) \in E$ such that $\lambda(q) \neq (\beta_1(x))^{-1}\lambda(p)\beta_2(x)$. Since $\mathcal{A}$ is trim, there exists some path $q \mapright{v} t$ for some $t \in T$. Hence $u_pxv, u_qv \in L(\mathcal{A})$, and to show that $R \not\subseteq D$ it suffices to show that $(\beta_1(u_pxv))^{-1} \beta_2(u_pxv) \neq (\beta_1(u_qv))^{-1} \beta_2(u_qv)$. Now
	$$(\beta_1(u_pxv))^{-1} \beta_2(u_pxv) = (\beta_1(v))^{-1}(\beta_1(x))^{-1}\lambda(p)\beta_2(x)\beta_2(v),$$
	$$(\beta_1(u_qv))^{-1} \beta_2(u_qv) = (\beta_1(v))^{-1}\lambda(q)\beta_2(v),$$
	so our goal follows from the inequality $\lambda(q) \neq (\beta_1(x))^{-1}\lambda(p)\beta_2(x)$.
	
	Conversely, assume that both conditions in the right hand side of (\ref{spa1}) hold. We show that
\begin{equation}
	\label{spa2}
	\mbox{if $q_0 \mapright{v} q$ is a path in $\mathcal{A}$, then $(\beta_1(v))^{-1}\beta_2(v) = \lambda(q)$.}
	\end{equation}

We use induction on $|v|$. The case $|v| = 0$ holds trivially, so we assume that $v = wx$, with $x \in \hat{A}$, and the claim holds for $w$. Then
we can split the path $q_0 \mapright{v} q$ into $q_0 \mapright{w} p \mapright{x} q$. Using the induction hypothesis and the right hand side of (\ref{spa1}), we get
$$(\beta_1(v))^{-1}\beta_2(v) = (\beta_1(x))^{-1}(\beta_1(w))^{-1}\beta_2(w)\beta_2(x) = (\beta_1(x))^{-1}\lambda(p)\beta_2(x) = \lambda_q,$$
so (\ref{spa2}) holds.

In particular, if $v \in L(\mathcal{A})$, we have a path  $q_0 \mapright{v} t$ for some $t \in T$, hence $(\beta_1(v))^{-1}\beta_2(v) = \lambda(t) = 1$ and so $\beta(v) \in D$. Therefore $R = \beta(L(\mathcal{A})) \subseteq D$ and so (\ref{spa1}) holds.
	
	Since $G$ has decidable word problem, both conditions in the right hand side of (\ref{spa1}) are decidable.
\end{proof}

We can now solve the {\it homomorphism problem}, which allows us to understand how easy it is to define homomorphisms by setting the images of a generating set.

\begin{theorem}[Homomorphism problem]
\label{homo}
Let $M$ be a graded submonoid of a virtually free group and let $G$ be a group with decidable word problem. Given a finite generating set $S$ of $M$ and a map $\varphi: S \rightarrow G$, one can decide if $\varphi$ can be extended to a homomorphism $\Phi: M \rightarrow G$. 
\end{theorem}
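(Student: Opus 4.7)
The plan is to reformulate the existence of an extension as a diagonality test for a rational subset of $G \times G$, and then appeal to Lemma \ref{language_finite} together with Lemma \ref{rational_subsets}. Let $\alpha_S : S^* \to M$ denote the canonical projection and let $\wt\varphi : S^* \to G$ be the monoid homomorphism determined by $\varphi$. By the universal property of the free monoid, $\varphi$ extends to a (necessarily unique) homomorphism $\Phi : M \to G$ if and only if $\wt\varphi$ factors through $\alpha_S$, i.e.,
$$
(u,v) \in WP_S(M) \;\Longrightarrow\; \wt\varphi(u) = \wt\varphi(v).
$$
Writing $D = \{ (g,g) \mid g \in G \}$ for the diagonal of $G \times G$ and setting $R = (\wt\varphi \times \wt\varphi)(WP_S(M))$, this condition is precisely $R \subseteq D$.

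Next I would make $R$ effective as a rational subset of $G \times G$. Since $M$ is a graded submonoid of a virtually free group, Lemma \ref{language_finite} yields an effectively computable constant $C > 0$ together with a finite automaton $\Gamma_S^C$ over $\hat S$ such that $\pi(L(\Gamma_S^C)) = WP_S(M)$. Replacing $G$ by the subgroup $\langle \varphi(S) \rangle$ if necessary, we may assume that $G$ is generated by a finite set $A$ and that each image $\varphi(s)$ is given as a word $w_s \in A^*$; the word problem remains decidable. Build a finite automaton $\mathcal A$ over the alphabet $\hat A$ from $\Gamma_S^C$ by subdividing every edge $p \mapright{(s,1)} q$ into a $(w_s,1)$-labelled path through fresh intermediate states, and every edge $p \mapright{(1,s)} q$ into a $(1,w_s)$-labelled path. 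Letting $\beta : \hat A^* \to G \times G$ denote the canonical map, a direct check gives $\beta(L(\mathcal A)) = R$, so that $R$ is an effectively given rational subset of $G \times G$.

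Finally, I would apply Lemma \ref{rational_subsets} to $R$: since $G$ is f.g.\ with decidable word problem, it is decidable whether $R \subseteq D$, and by the first paragraph this is equivalent to $\varphi$ extending to a homomorphism $\Phi : M \to G$. All the substantive content is packed into earlier results: Lemma \ref{language_finite} encodes every relation of $M$ into a finite automaton, and Lemma \ref{rational_subsets} performs the diagonality test. What remains is effective bookkeeping via the words $w_s$, and I do not expect any genuine obstacle there.
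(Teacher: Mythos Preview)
Your proof is correct and follows essentially the same route as the paper: reduce the extension problem to checking $(\wt\varphi \times \wt\varphi)(WP_S(M)) \subseteq D$, invoke Lemma \ref{language_finite} to exhibit $WP_S(M)$ as an effectively rational subset of $S^* \times S^*$, push it forward to a rational subset of $G \times G$, and decide diagonality via Lemma \ref{rational_subsets}. Your only deviations are cosmetic --- you make the passage to the f.g.\ subgroup $\langle \varphi(S)\rangle$ explicit (needed for Lemma \ref{rational_subsets}) and spell out the edge-subdivision construction of the automaton over $\hat A$, whereas the paper simply cites preservation of rational subsets under monoid homomorphisms.
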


\begin{proof}
The mapping $\varphi$ induces a homomorphism $\overline{\varphi}: S^* \rightarrow G$ (where $S^*$ denotes the free monoid on $S$), as a well as a homomorphism $\widetilde{\varphi}: S^* \times S^* \rightarrow G \times G$ via $\wt{\varphi}((x,y))=(\overline{\varphi}(x), \overline{\varphi}(y))$ for all $x,y \in S^*$. Let $\alpha_S: S^* \to M$ and $\pi: \hat{S}^* \to S^* \times S^*$ be the canonical epimorphisms. 

By Lemma \ref{language_finite}, we can construct a finite automaton $\Gamma^C_S$ such that $WP_S(M) = \pi(L(\Gamma^C_S))$, making $WP_S(M)$ a rational subset of $S^* \times S^*$. Then $\wt{\varphi}(WP_S(M))$ is a rational subset of $G \times G$, since it is the image of a rational subset under a homomorphism. 

Let $D$ be the diagonal subgroup of $G \times G$. 
We claim that $\wt{\varphi}(WP_S(M)) \subseteq D$ if and only if there exists a homomorphism $\Phi: M \rightarrow G$ such that $\Phi\alpha_S=\overline{\varphi}$. This will solve the homomorphism problem. \par 
Indeed,  such a homomorphism exists if and only if for every $x,y \in S^*$ such that $\alpha_S(x)=\alpha_S(y)$ we have $\overline{\varphi}(x)=\overline{\varphi}(y)$: it is clear that it is a necessary condition, but it is also sufficient since if it's true then for every $z \in M$ one can define $\psi(z)=\overline{\varphi}(w)$ for any $w \in \alpha_S^{-1}(z)$, and this will give the desired homomorphism. This happens if and only if $\wt{\varphi}(WP_S(M)) \subseteq D$, decidable by Lemma \ref{rational_subsets}.
\end{proof}

We can use Theorem \ref{homo} to solve the isomorphism problem for graded submonoids of virtually free groups.

\begin{definition} 
{\it The isomorphism problem} for a class $\mathfrak{C}$ of f.g. monoids asks whether, given two monoids $M_1$ and $M_2$ in $\mathfrak{C}$ (given by their finite generating sets), one can decide whether or not $M_1$ is isomorphic to $M_2$. \par 
\end{definition}

\begin{theorem}\label{iso}
	The isomorphism problem is decidable for the class of graded submonoids of virtually free groups.
\end{theorem}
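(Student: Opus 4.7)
The plan is to reduce the isomorphism problem to finitely many instances of the homomorphism problem from Theorem \ref{homo}, using the fact that a graded monoid has a canonical minimal generating set.

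First, I would compute for each $M_i$ the set $I_i$ of irreducible elements. As remarked after the definition of irreducibility, $I_i = (M_i \setminus \{1\}) \setminus (M_i \setminus \{1\})^2$ is an effectively constructible rational subset of the ambient virtually free group $G_i$, so $I_i$ itself is effectively computable. By Lemma \ref{graded_properties}(3), $I_i$ generates $M_i$, and by the very definition of irreducibility every generating set of $M_i$ must contain $I_i$; hence $I_i$ is the unique minimal generating set of $M_i$. Moreover, any monoid isomorphism $\Phi: M_1 \to M_2$ must send irreducible elements to irreducible elements: if $u \in I_1$ and $\Phi(u) = v_1 v_2$ with $v_1,v_2 \neq 1$, then $u = \Phi^{-1}(v_1)\Phi^{-1}(v_2)$ is a nontrivial factorization of $u$, a contradiction. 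Therefore any isomorphism restricts to a bijection $I_1 \to I_2$.

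If $|I_1| \neq |I_2|$, output \emph{not isomorphic}. Otherwise, enumerate the finitely many bijections $\varphi: I_1 \to I_2$ and, for each, apply Theorem \ref{homo} twice: test whether $\varphi$ extends to a homomorphism $\Phi: M_1 \to G_2$, and whether $\varphi^{-1}$ extends to a homomorphism $\Psi: M_2 \to G_1$. Both target groups are virtually free and hence have decidable word problem, so Theorem \ref{homo} applies in both cases. If both extensions exist, then $\Psi \circ \Phi: M_1 \to G_1$ is a homomorphism restricting to the identity on the generating set $I_1$, hence equals the identity on $M_1$; symmetrically $\Phi \circ \Psi$ is the identity on $M_2$. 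This forces $\Phi(M_1) \subseteq M_2$ (and in fact equality), so $\Phi$ is an isomorphism $M_1 \to M_2$. Conversely, by the previous paragraph any isomorphism arises from such a bijection.

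Output \emph{isomorphic} if some bijection survives both tests, and \emph{not isomorphic} otherwise. Since the heavy lifting has already been absorbed into Theorem \ref{homo} (which itself relies on the rational word problem of Theorem \ref{graded_rational} and the diagonal-membership Lemma \ref{rational_subsets}), the only remaining content is the combinatorial wrapper above, and no serious obstacle arises beyond the observation that every isomorphism permutes the irreducibles.
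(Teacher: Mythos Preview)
Your proposal is correct and follows essentially the same approach as the paper: compute the irreducible generating sets, compare cardinalities, and then test each bijection between them via two applications of Theorem \ref{homo}. The only cosmetic difference is that the paper first embeds both monoids into a common virtually free group (using closure under free product), whereas you keep the ambient groups $G_1,G_2$ separate; both versions work.
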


\begin{proof}
Let $M, N$ be two graded submonoids of virtually free groups $G_1$ and $G_2$. Since $M,N$ are f.g., we can assume that $G_1,G_2$ are finitely generated. Since it is well known that the class of (f.g.) virtually free groups is closed under free product, we may assume that $M,N$ are submonoids of the same f.g. virtually free group $G$. 

By Lemma \ref{graded_properties}, since $M$ and $N$ are graded, they are generated by their sets of irreducible elements which can be computed from the original generating sets as noticed above. Thus we can suppose $M$ is given by its set of irreducible elements $X= \{ x_1, \ldots, x_k \}$ and $N$ is given by its set of irreducible elements $Y= \{ y_1, \ldots, y_l \}$.
\par 
Note that the isomorphism between $M$ and $N$, if it exists, should induce a bijection between the sets of irreducible elements $X$ and $Y$. Thus, if $k \neq l$, we conclude that $M$ is not isomorphic to $N$.
\par 
Suppose now $k=l$. Let $\gamma_1, \ldots, \gamma_s$ be all bijections from $X$ to $Y$, then $\gamma_1^{-1}, \ldots, \gamma_s^{-1}$ are all bijections from $Y$ to $X$. For every $i=1, \ldots, s$ check whether the map $\gamma_i$ can be extended to a homomorphism $\overline{\gamma_i}$ from $M$ to $N$, or, equivalently, from $M$ to $G$; this can be checked by Theorem \ref{homo}. In the same way check whether the map $\gamma_i^{-1}$ can be extended to a homomorphism $\overline{\gamma_i^{-1}}$ from $N$ to $M$, or, equivalently, from $N$ to $G$. If for some $i=1, \ldots, s$, both extensions $\overline{\gamma_i}$ and $\overline{\gamma_i^{-1}}$ exist, then  $\overline{\gamma_i}$ is a bijection, and so it is an isomorphism from $M$ to $N$, with its inverse given by $\overline{\gamma_i^{-1}}$, and $M$ and $N$ are isomorphic. Otherwise $M$ and $N$ are not isomorphic.
This proves Theorem \ref{iso}.

\end{proof}

\par 

\section{Language-theoretic characterizations}


Let $M$ be a monoid and let $\alpha:X^* \to M$ be a homomorphism of the free monoid on the set $X$ onto $M$. A mapping $\beta:X^* \to X^*$ is a {\em description} of $M$ with respect to $\alpha$ if $\beta(X^*)$ is a cross-section of $\alpha$ and $\alpha\beta = \alpha$, i.e., each element of $M$ is represented (through $\alpha$) by a unique element of $\beta(X^*)$ and each word $u \in X^*$ represents the same element of $M$ as $\beta(u)$.

We can view $\beta$ as a subset of $X^* \times X^*$ as $\beta = \{ (u,\beta(u)) \mid u \in X^*\}$. Then we say that $\beta$ is rational if it is rational as a subset of $X^* \times X^*$. This implies in particular that $X$ must be finite (and therefore $M$ is finitely generated). Moreover, this property does not depend on the finite generating system chosen, see \cite{Sak2}. A monoid is called {\it rational} if it has a rational description. See \cite{Sak2} for more details on descriptions and rational monoids. Recall a monoid $M$ is Kleene if the sets of rational and recognizable subsets of $M$ coincide.

Note that in general all rational monoids are Kleene \cite[Theorem 4.1]{Sak2}, but not all Kleene monoids are rational \cite{PS}. Moreover, a commutative monoid is rational if and only if it is Kleene \cite{Rup}.



\begin{theorem}\label{rational_kleene}
Consider the three conditions
\begin{enumerate}
\item $M$ is a graded monoid,
\item $M$ is a rational monoid,
\item $M$ is a Kleene monoid,
\end{enumerate}
for a submonoid $M$ of a virtually free group $G$. Then 1 $\Rightarrow$ 2 $\Rightarrow$ 3. Equivalence holds if and only if $G$ is a free group.
\end{theorem}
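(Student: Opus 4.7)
The plan is to prove the chain $1 \Rightarrow 2 \Rightarrow 3$ in the general virtually free setting, and then separately establish the characterization of when all three are equivalent. For $1 \Rightarrow 2$, I would apply Theorem~\ref{graded_rational} to get that $WP_S(M)$ is rational in $S^* \times S^*$, fix a total order on $S$ extended to shortlex on $S^*$, and define $\beta(u)$ as the shortlex-minimum of the (finite) fiber $\alpha_S^{-1}\alpha_S(u)$. The graph of $\beta$ equals $WP_S(M) \cap (S^* \times N)$, where $N$ is the language of shortlex-minimum normal forms; the rationality of $N$ follows from that of $WP_S(M)$ together with the linearly bounded length-ratio provided by Theorem~\ref{qg} (words representing the same element of $M$ have comparable lengths), which allows the shortlex restriction to be handled by a bounded synchronous product. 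Then $\mathrm{graph}(\beta)$ is rational as an intersection of a rational relation with a recognizable one, yielding a rational description of $M$. The implication $2 \Rightarrow 3$ is \cite[Theorem~4.1]{Sak2}, as already cited.

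For the ``only if'' direction of the equivalence, suppose $G$ is virtually free but not free. Since torsion-free virtually free groups are free, $G$ contains a non-trivial element $g$ of finite order. The finite cyclic subgroup $M = \langle g\rangle$, viewed as a submonoid of $G$, is rational by \cite[Theorem~4.2]{NPR} and trivially Kleene (both $\mathrm{Rat}(M)$ and $\mathrm{Rec}(M)$ equal $2^M$). But $g \neq 1$ is invertible in $M$, so by Lemma~\ref{graded_properties}(2) $M$ is not graded; thus conditions 2 and 3 hold while 1 fails.

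For the ``if'' direction, suppose $G$ is free and $M$ is a finitely generated Kleene submonoid; I show $M$ is graded by contradiction. Since $G$ is torsion-free, $1$ is the unique idempotent of $M$, so by Corollary~\ref{eqgradfree} and Proposition~\ref{eqgrad} it suffices to rule out both non-trivial invertible elements and elements with infinite $\alpha_S$-fiber. Suppose first that $u \in M \setminus \{1\}$ is invertible. Then $\{u\}^* \in \mathrm{Rat}(M)$, and if it is recognizable via $\theta \colon M \to K$ with $K$ finite, pigeonholing gives $\theta(u^n) = \theta(u^{n+p})$ for some $n, p \geq 1$; multiplying by $\theta(u^{-n})$ on the left yields $\theta(u^p) = 1_K$, so $\theta(u^{-1}) = \theta(u^{p-1}) \in \theta(\{u\}^*)$, forcing $u^{-1} \in \{u\}^*$, which is impossible since $u$ has infinite order in $G$. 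So $M$ has no non-trivial invertible elements, and then any $s_1 \cdots s_n \in \alpha_S^{-1}(1)$ with $n \geq 1$ would give $s_1^{-1} = s_2 \cdots s_n \in M$, a contradiction; thus $\alpha_S^{-1}(1) = \{\varepsilon\}$. Now assume some $v \in M$ has $|\alpha_S^{-1}(v)| = \infty$, which is the definition of $M$ being non-graded. Kleeneness makes $\{v\}$ recognizable, hence $\alpha_S^{-1}(v)$ rational, and the pumping lemma produces $x, y, z \in S^*$ with $y \neq \varepsilon$ and $xy^kz \in \alpha_S^{-1}(v)$ for all $k \geq 0$. Cancelling the equations for $k$ and $k+1$ in the group $G$ yields $\alpha_S(y) = 1$, so $y \in \alpha_S^{-1}(1) = \{\varepsilon\}$, contradicting $y \neq \varepsilon$. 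Therefore $M$ is graded.

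The main obstacle lies in the $1 \Rightarrow 2$ implication: rational relations over free monoids are not closed under arbitrary intersection, so obtaining the rationality of the shortlex-normal-form selector requires combining the finiteness of the fibers (gradedness) with the linear length-ratio bound (quasi-geodesic property) to legitimize the construction.
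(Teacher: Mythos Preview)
Your $2\Rightarrow 3$, the ``only if'' direction, and the $3\Rightarrow 1$ argument for free $G$ are all correct. The last of these differs pleasantly from the paper's: the paper argues with the syntactic congruence of $\{1\}$ and of $\{u\}$ via prefixes, whereas you use recognizability of $\{u\}^*$ plus pigeonhole to rule out invertibles, and then recognizability of $\{v\}$ plus the pumping lemma to rule out infinite fibres. Both routes work.

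The genuine gap is in $1\Rightarrow 2$, specifically in the claim that the set $N$ of shortlex-minimal representatives is rational. You assert this follows from ``the linearly bounded length-ratio provided by Theorem~\ref{qg} \ldots which allows the shortlex restriction to be handled by a bounded synchronous product'', but this does not go through. The quasi-geodesic property gives only $|u| \le \lambda^2|v| + c$ for $(u,v)\in WP_S(M)$, not a bound on $\big||u|-|v|\big|$, so the relation is not of bounded delay and no synchronous product is available. Concretely, to obtain $N$ you would need to compute $\pi_2\big(WP_S(M)\cap\{(u,v):u<_{\mathrm{sl}}v\}\big)$, and the shortlex relation is neither recognizable in $S^*\times S^*$ nor rational as a language over $\hat S$ (the part $\{(u,v):|u|<|v|\}$ already fails). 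Rational relations are not closed under intersection with rational relations, so this step is blocked.

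The paper circumvents exactly this obstacle by two moves you are missing. First, it uses the \emph{pure lexicographic} order rather than shortlex; since fibres are finite (gradedness), the lexicographic minimum exists, and the comparison ``$u<_{\mathrm{lex}}v$'' depends only on a common prefix followed by one differing letter. Second, it works at the level of the free monoid $\hat S^*$ (where intersection of rational \emph{languages} is unproblematic) and exploits a combinatorial fact about the relation automaton $\Gamma_S^C$: because $M$ is graded, there is no path with $K$ consecutive letters from $S\times\{1\}$ (or from $\{1\}\times S$), $K$ the number of states. This bound lets one encode ``first component is lexicographically smaller'' by a rational language $Z^*R\subseteq \hat S^*$, yielding $N = S^*\setminus \pi_2\big(L(\Gamma_S^C)\cap Z^*R\big)$. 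Once $N$ is rational, your final step $\mathrm{graph}(\beta)=WP_S(M)\cap(S^*\times N)$ with $S^*\times N$ recognizable is exactly what the paper does.
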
 

\begin{proof}

1 $\Rightarrow$ 2. Fix a finite subset $S \subseteq M \setminus \{ 1 \}$ generating $M$ as a monoid. We fix an arbitrary total order $<$ on $S$ and we consider the lexicographic order (the dictionary order) on $S^*$. We define a description $\beta:S^* \to S^*$ with respect to the canonical homomorphism $\alpha:S^* \to M$ as follows: given $u \in S^*$, $\beta(u)$ is the minimum element of $\alpha^{-1}\alpha(u)$ for the lexicographic order. 
Note that the lexicographic order is not a well-order, but we only need its restriction to $\alpha^{-1}\alpha(u)$, which is finite since $M$ is graded. We must show that $\beta$ is a rational subset of $S^* \times S^*$.

We show next that
\begin{equation}
\label{rk1}
\beta(S^*) \in {\rm Rat}(S^*).
\end{equation}

Recall the finite subautomaton $\Gamma_S^C$ arising from Lemma \ref{language_finite}, as well as the equality $WP_S(M) = \pi(L(\Gamma_S^C))$ and the notation $\pi$, $\pi_1$, $\pi_2$. Write $L = L(\Gamma^C_S)$ and $B = \beta(S^*)$.

Let $K$ be the number of vertices of $\Gamma^C_S$. Suppose that there exists a path labelled by $(x_1,1)\ldots(x_K,1)$ in $\Gamma^C_S$, for some $x_i \in S$. Then there exists some loop at some vertex $p$ with label $(x_i,1)\ldots (x_j,1)$. with $0 \leq i \leq j \leq K$. Since $\Gamma^C_S$ may be assumed to be trim, we have paths from the basepoint $1$ to $p$ and back. Assume that the image by $\pi$ of their labels is respectively $(u,u')$ and $(v,v')$. It follows that $(u(x_i\ldots x_j)^nv,u'v') \in \pi(L)$ for every $n \geq 0$. Hence $u(x_i\ldots x_j)^nv \in \alpha^{-1}\alpha(u'v')$ for every $n \geq 0$, contradicting $M$ being graded. Thus there is no path labelled by $(x_1,1)\ldots(x_K,1)$ in $\Gamma^C_S$, and the same is true for paths labelled by $(1,x_1)\ldots(1,x_K)$.

Consider the following rational languages over $\hat{S}^*$:
$$\begin{array}{lll}
Z&=&\{ (x,1)(1,x) \mid x \in S\} \cup \{ (1,x)(x,1) \mid x \in S\},\\
T_1&=&\{ t \in (S \times \{ 1\})^* \mid |t| < K\},\\
T_2&=&\{ t \in (\{ 1\} \times S)^* \mid |t| < K\},\\
R&=&\{(x,1)t_1(1,x')y \mid x,x' \in S; x < x'; t_1 \in T_1; y \in \hat{S}^*\}\\
&\cup&\{(1,x')t_2(x,1)y \mid x,x' \in S; x < x'; t_2 \in T_2; y \in \hat{S}^*\}.
\end{array}$$
We claim that
\begin{equation}
\label{rk4}
B = S^* \setminus \pi_2(L \cap Z^*R). 
\end{equation}

Let $u \in B$. Suppose that $u = \pi_2(v)$ for some $v \in L \cap Z^*R$. Out of symmetry, we may assume that $v = z(x,1)t_1(1,x')y$, where $z \in Z^*$, $x,x' \in S$, $x < x'$, $t_1 \in T_1$ and $y \in \hat{S}^*$. It follows from the definitions that there exist $q,r,y_1,y_2 \in S^*$ such that $\pi(v) = (qxry_1,qx'y_2)$. Since $v \in L$ and $WP_S(M) = \pi(L)$, we have $\alpha(qxry_1) = \alpha(qx'y_2)$. Since $x < x'$, we have $qxry_1 < qx'y_2$ in the alphabetic order. But $qx'y_2 = \pi_2(v) = u \in B$, which is supposed to be the minimum element in $\alpha^{-1}\alpha(qx'y_2)$. In view of this contradiction, we deduce that $u \in S^* \setminus \pi_2(L \cap Z^*R)$ and so $B \subseteq S^* \setminus \pi_2(L \cap Z^*R)$.

Conversely, let $u \in S^* \setminus B$. Then $\beta(u) < u$. Suppose that $\beta(u)$ is a prefix of $u$. Writing $u = \beta(u)w$, we deduce from $\alpha\beta(u) = \alpha (u)$ that $\alpha(w) = 1$, which implies $\alpha^{-1}(1)$ infinite (since $w \neq 1$ and free monoids are torsion-free), contradicting the assumption that $M$ is graded. Hence $\beta(u)$ is not a prefix of $u$, and so there exist 
$x,x' \in S$ and $q,y_1,y_2 \in S^*$ such that $x < x'$ and $(\beta(u),u) = (qxy_1,qx'y_2)$. Note that there exists $z \in Z^*$ such that $\pi(z)=(q,q)$. In particular, $z \in L$. Since $\alpha\beta(u) = \alpha (u)$, we have $\alpha(q)\alpha(xy_1)=\alpha(q)\alpha(x'y_2)$ in the group $G$, so $\alpha(xy_1)=\alpha(x'y_2)$. Now it follows from $WP_S(M) = \pi(L)$ that there exists $w' \in L$ such that $\pi(w')=(xy_1,x'y_2)$. Then it follows from the above remarks that either $w'=(x,1)t_1(1,x')y$ for some $t_1 \in T_1$, $y \in \hat{S}^*$, or $w'=(1,x')t_2(x,1)y$ for some $t_2 \in T_2$, $y \in \hat{S}^*$. In both cases it follows that $w' \in L \cap R$, and so $zw' \in L \cap Z^*R$. 
Moreover, $\pi(zw') = \pi(z) \pi(w')=(q,q)(xy_1,x'y_2)=(qxy_1,qx'y_2)=(\beta(u),u)$. This implies that $u \in \pi_2(L \cap Z^*R)$ and so $S^* \setminus B \subseteq \pi_2(L \cap Z^*R)$. Therefore (\ref{rk4}) holds. 

Now $L \cap Z^*R \subseteq \hat{S}^*$ being rational follows from the standard  closure properties of rational languages \cite[Proposition I.4.2]{Ber}, hence $\pi_2(L \cap Z^*R) \subseteq S^*$ is rational since rational subsets are preserved by monoid homomorphisms \cite[Proposition II.2.2]{Ber}. Thus $B$ is rational since rational languages are closed under complement. Therefore (\ref{rk1}) holds.

Next we show that
\begin{equation}
\label{rk2}
\{ (u,\beta(u)) \mid u \in S^* \} = \pi(L \cap \pi_2^{-1}(B)).
\end{equation}

Indeed, let $u \in S^*$. We have $\alpha(u) = \alpha\beta(u)$. Since $WP_S(M) = \pi(L)$, there exists some $w \in L$ such that $\pi(w) = (u,\beta(u))$. Moreover, $\pi_2(w) = \beta(u) \in B$, hence $w \in L \cap \pi_2^{-1}(B)$. This proves the direct inclusion of (\ref{rk2}). 

Conversely, let $w \in L \cap \pi_2^{-1}(B)$. Since $WP_S(M) = \pi(L)$, we get $\pi(w) = (u,v)$ for some $u,v \in S^*$ such that $\alpha(u) = \alpha(v)$. But $\pi_2(w) \in B$ yields $v \in B$ and so $\beta(v) = v$. Thus $\alpha(u) = \alpha(v)$ yields $\beta(u) = \beta(v) = v$ and so $\pi(w) = (u,\beta(u))$. Therefore (\ref{rk2}) holds.

Now $L \subseteq \hat{S}^*$ is a rational language. Since $B \subseteq S^*$ is rational by (\ref{rk1}) and the inverse image of a rational language by a free monoid homomorphism is still a rational language \cite[Proposition I.4.2]{Ber}, and using also closure under intersection, it follows that $L \cap \pi_2^{-1}(B) \in {\rm Rat}(\hat{S}^*)$. Since rational subsets are preserved by monoid homomorphisms, it follows that $\{ (u,\beta(u)) \mid  u \in S^* \}$ is a rational subset of $S^* \times S^*$ as required.

2 $\Rightarrow$ 3. By \cite[Theorem 4.1]{Sak2}.

We complete the proof by showing that
\begin{equation}
\label{newe2}
\mbox{ 3 $\Rightarrow$ 1 if and only if $G$ is a free group.}
\end{equation}

Assume first that $G$ is a free group and $M$ is a Kleene monoid.
Recall that $\sim_L$ denotes the syntactic congruence for $L \subseteq M$, in particular, for $g \in M$, $\sim_g$ is the syntactic congruence for the set $\{ g \}$. Suppose that $M$ contains an invertible element $u \neq 1$.
Clearly, the singleton set $\{ 1 \}$ is rational, hence recognizable by (iii). Let $k,m \geq 0$ be distinct. We have $1\cdot u^k \cdot (u^{-1})^k = 1$ but $1\cdot u^m \cdot (u^{-1})^k \neq 1$ (since $u \in G\setminus \{ 1\}$ must have infinite order). Thus $u^k \not\sim_1 u^m$ and so $\sim_1$ has infinite index, contradicting $1 \in {\rm Rec}(M)$. Therefore $1$ is the unique invertible element of $M$.

Now consider a finite set $X$ and a surjective monoid homomorphism $\varphi:X^* \to M$. We may assume that $1 \notin \varphi(X)$. Suppose that there exists some $u \in M$ such that $\varphi^{-1}(u)$ is infinite. On the other hand, $\{ u \} \in {\rm Rat}(M) = {\rm Rec}(M)$ by (iii), hence $\sim_{u}$ has finite index, say $k$. Since $X$ is finite and $\varphi^{-1}(u)$ is infinite, there exists some $v \in \varphi^{-1}(u)$ with length $> k$. For $i = 0,\ldots,k$, let $v_i$ denote the prefix of length $i$ of $v$, and write $v = v_iv'_i$.

Suppose that $\varphi(v_i) \sim_u \varphi(v_j)$ with $0 \leq i < j \leq k$. Since $1 \cdot \varphi(v_i) \cdot \varphi(v'_i) = u$, then 
$1 \cdot \varphi(v_j) \cdot \varphi(v'_i) = u$ and so $\varphi(v_j)\varphi(v'_i) = \varphi(v_i)\varphi(v'_i)$. Since $M$ (being a submonoid of $G$) is cancellative, it follows that $\varphi(v_i) = \varphi(v_j)$. Now $v_i$ is a proper prefix of $v_j$, say $v_j = v_iz$, hence $\varphi(z) = 1$. Let $x$ be the first letter of $z$ and write $z = xz'$. Then $\varphi(x)\varphi(z') = 1$. Since $\varphi(x) \neq 1$ by assumption, then $\varphi(x)$ would be an invertible element of $M$ different from the identity, contradicting our previous conclusion. Therefore $\varphi(v_i) \not\sim_u \varphi(v_j)$ whenever $0 \leq i < j \leq k$, contradicting $|M/\sim_u| = k$. Thus $\varphi^{-1}(u)$ is finite for every $u \in M$ and so $M$ is graded.

Assume now that $G$ is not a free group. By a theorem of Stallings \cite{Sta2}, a f.g. virtually free group is free if and only if it is torsion-free. Hence $G$ has some element $g \neq 1$ of finite order. Set $M = \langle g\rangle$. Since $M$ is finite, it is obviously a Kleene monoid. However $M$ is not graded by Lemma \ref{graded_properties}(2). Note that the implication 2 $\Rightarrow$ 1 also fails in this case since any finite nontrivial group is rational by \cite{Sak2}.
\end{proof}

For non virtually free groups, the implication $1 \Rightarrow 3$ (and therefore $1 \Rightarrow 2$) may fail in view of the following result.

Recall that a {\em right-angled Artin group} is a group of the form $F/N$, where $F$ is the free group on some finite set $A$ and $N$ is the normal subgroup generated by some subset of $\{ [a,b] \mid a,b \in A \}$.

\begin{proposition}

\label{raag}

The following conditions are equivalent for a right-angled Artin group $G$:

\begin{enumerate}

\item

Every graded submonoid of $G$ is a Kleene monoid.

\item

$G$ is a free group.

\end{enumerate}

\end{proposition}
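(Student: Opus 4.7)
The plan is to prove the two implications separately. The direction $2 \Rightarrow 1$ is immediate from Theorem \ref{rational_kleene}: free groups are virtually free, and that theorem shows every graded submonoid of a virtually free group is Kleene (via the chain graded $\Rightarrow$ rational $\Rightarrow$ Kleene). For $1 \Rightarrow 2$, I would argue by contraposition, producing in any non-free right-angled Artin group $G$ an explicit graded submonoid that fails to be Kleene.

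Fix a presentation $G = F(A)/N$ where $N$ is generated by commutators $[a,b]$ with $(a,b)$ ranging over some symmetric relation $E \subseteq A \times A$. If $G$ is not free, then $E$ is nonempty, so there exist distinct $a,b \in A$ with $ab = ba$ in $G$. The next step is to construct a retraction $\pi: G \to \mathbb{Z}^2$ by mapping $a \mapsto (1,0)$, $b \mapsto (0,1)$, and $c \mapsto (0,0)$ for every $c \in A \setminus \{a,b\}$. This is well-defined because every defining relator $[c,d]$ is sent to a commutator in the abelian group $\mathbb{Z}^2$, hence to $0$. Composing $\pi$ with the natural map $\mathbb{Z}^2 \to G$ sending $(m,n) \mapsto a^m b^n$ shows this map is injective, so $\langle a,b\rangle \cong \mathbb{Z}^2$ as subgroups of $G$. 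Consequently the submonoid $M = \langle a,b\rangle_{\mathrm{mon}}$ of $G$ is isomorphic to $\mathbb{N} \times \mathbb{N}$.

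It then remains to verify that $\mathbb{N} \times \mathbb{N}$ is graded but not Kleene. Gradedness is easy: the canonical epimorphism $\varphi: \{x,y\}^* \to \mathbb{N}^2$ sending $x \mapsto (1,0)$ and $y \mapsto (0,1)$ satisfies $|\varphi^{-1}(m,n)| = \binom{m+n}{m}$, which is always finite. For failure of Kleene, I would consider the diagonal $D = \{(n,n) \mid n \geq 0\}$. Since $D = \{(1,1)\}^*$, we have $D \in \mathrm{Rat}(\mathbb{N}^2)$. However, $D$ is not recognizable: for any distinct $m,n \geq 0$, taking $p = (0,m)$ and $q = 0$ gives $p + (m,0) + q = (m,m) \in D$ but $p + (n,0) + q = (n,m) \notin D$, so the elements $(n,0)$ belong to pairwise distinct classes of the syntactic congruence $\sim_D$, which therefore has infinite index. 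Hence $M$ is a graded, non-Kleene submonoid of $G$, completing the contrapositive.

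The only delicate point is the identification $\langle a,b\rangle \cong \mathbb{Z}^2$, which is the standard retraction property of right-angled Artin groups along subsets of the vertex set of the defining graph; the rest of the proof is a routine verification relying only on the familiar non-Kleene behaviour of $\mathbb{N}^2$ witnessed by its diagonal.
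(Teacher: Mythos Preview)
Your proof is correct and follows essentially the same route as the paper: both directions use Theorem~\ref{rational_kleene} for $2 \Rightarrow 1$, and for the contrapositive both exhibit $\mathbb{N}\times\mathbb{N}$ as a graded non-Kleene submonoid via the diagonal $\{(n,n)\}$ being rational but not recognizable. The only cosmetic differences are that you justify the embedding $\mathbb{Z}^2 \hookrightarrow G$ explicitly via a retraction (the paper just asserts it), and you verify gradedness by the direct count $|\varphi^{-1}(m,n)| = \binom{m+n}{m}$ whereas the paper invokes Proposition~\ref{eqgrad}.
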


\begin{proof}

$1 \Rightarrow 2$. Suppose that $G$ is not a free group. Then $\mathbb{Z} \times \mathbb{Z}$ is a submonoid of $G$, and so is $\mathbb{N} \times \mathbb{N}$.

Clearly, the identity $(0,0)$ is the unique idempotent and the unique invertible element of the f.g. (additive) monoid $\mathbb{N} \times \mathbb{N}$. If $(m,n) \in \mathbb{N} \times \mathbb{N}$, then $(m,n)$ has precisely $(m+1)(n+1)$ factors, hence $\mathbb{N} \times \mathbb{N}$ is finite $\cal{J}$-above and therefore graded by Proposition \ref{eqgrad}.

Consider the rational subset $L = (1,1)^* = \{ (n,n) \mid  n \in \mathbb{N} \}$ of $\mathbb{N} \times \mathbb{N}$. If $m,n \in \mathbb{N}$ are distinct, then $(0,m) \not\sim_L (0,n)$ because $(0,m)+(m,0) \in L$ but $(0,n)+(m,0) \notin L$. Thus $\sim_L$ has infinite index and so $L$ is not a recognizable subset of $\mathbb{N} \times \mathbb{N}$. Therefore $\mathbb{N} \times \mathbb{N}$ is not a Kleene monoid and $G$ fails condition 1.

$2 \Rightarrow 1$. By Theorem \ref{rational_kleene}.

\end{proof}

\section{Open problems}

\begin{question}
	Is the homomorphism problem decidable for every f.g. submonoid of a (virtually) free group? 
\end{question}

\begin{question}
	Is the isomorphism problem decidable for the class of all f.g. submonoids of a (virtually) free group? 		
\end{question}

\begin{question}
	Given a f.g. submonoid of a free group, can one decide whether it embeds in a free monoid or not? Note that it should be graded if it does, but this is not sufficient, see Example \ref{ex1}.
\end{question}

 \section*{Acknowledgements}

We are grateful to the referees for insightful comments and suggestions which have improved the paper.

Both authors were partially supported by CMUP (UID/MAT/00144/2013), which is funded by FCT (Portugal) with national (MEC) and European structural funds (FEDER), under the partnership agreement PT2020. The second author also thanks the Russian Foundation for Basic Research (project no. 15-01-05823). 

\end{document}